\documentclass{psp}
\usepackage{amsmath,amssymb,amsfonts,enumitem}


\newtheorem{prop}{Proposition}
\newtheorem{lem}[prop]{Lemma}
\newtheorem{theorem}[prop]{Theorem}
\newtheorem{cor}[prop]{Corollary}

\newtheorem{rmk}[prop]{Remark}

\newcommand{\Zb}{\mathbf{Z}}

\newcommand{\Qb}{\mathbf{Q}}

\newcommand{\XX}{\mathfrak X}

\newcommand{\mc}[1]{\mathcal{#1}}

\newcommand{\mf}[1]{\mathfrak{#1}}


\newcommand{\triv}{\{1\}}
\newcommand{\inv}{^{-1}}

\newcommand{\Aut}{\mathrm{Aut}}

\newcommand{\normal}{\trianglelefteq}

\newcommand{\proj}{\mathrm{pr}}

\newcommand{\FC}{\mathrm{FC}}


\newcommand{\wt}[1]{\widetilde{#1}}


\usepackage[usenames, dvipsnames]{color}


\begin{document}
\title[Residual and profinite closures of commensurated subgroups]{On the residual and profinite closures\\ of  commensurated subgroups}
\author[\small{P.-E. Caprace, P. Kropholler, C. Reid and P. Wesolek}]{PIERRE-EMMANUEL CAPRACE \\
Universit\'e Catholique de Louvain, IRMP,
\addressbreak
Chemin du Cyclotron 2, bte L7.01.02, 1348 Louvain-la-Neuve, Belgique \addressbreak 
e-mail\textup{: \texttt{pe.caprace@uclouvain.be}}
\thanks{F.R.S.-FNRS senior research associate, supported in part by EPSRC grant no EP/K032208/1.}
\nextauthor
PETER H. KROPHOLLER \\
Mathematical Sciences, 
University of Southampton,  UK
\addressbreak 
e-mail\textup{: \texttt{p.h.kropholler@soton.ac.uk}}
\thanks{supported by  EPSRC grants no EP/K032208/1 and EP/ N007328/1.}
\nextauthor
COLIN D. REID \\
University of Newcastle, \addressbreak
School of Mathematical and Physical Sciences, \addressbreak
Callaghan, NSW 2308, 
Australia \addressbreak
e-mail\textup{: colin.d.reid@newcastle.edu.au}
\thanks{ARC DECRA fellow, supported in part by ARC Discovery Project DP120100996.}
\nextauthor
PHILLIP WESOLEK\\
Binghamton University, \addressbreak
Department of Mathematical Sciences, PO Box 6000, 	\addressbreak
Binghamton, New York 13902-6000, USA \addressbreak
e-mail\textup{: pwesolek@binghamton.edu}
}

\volume{???}
\pubyear{????}
\setcounter{page}{1}
\receivedline{Received \textup{10} July \textup{2017;}
              revised \textup{4} June \textup{2019}}

\maketitle

\begin{abstract} 
The residual closure of a subgroup $H$ of a group $G$ is the intersection of all virtually normal subgroups of $G$ containing $H$. We show that if $G$ is generated by finitely many cosets of $H$ and if $H$ is commensurated, then    the residual closure   of $H$ in $G$ is  virtually normal. This implies that  separable commensurated subgroups of finitely generated groups are virtually normal. A stream of applications   to separable subgroups, polycyclic groups, residually finite groups, groups acting on trees, lattices in products of trees and just-infinite groups then flows from this main result. 
\end{abstract}

\tableofcontents

This paper consists of a Main Theorem concerning commensurated subgroups and topologies related to profinite topologies. It then has a string of Corollaries. Some of the Corollaries are original, some known, some are well known. For example the applications to residually finite groups in Section 4 and some of the Corollaries in Section 8 are to the best of our knowledge new. But the point is to show that all our results flow in a natural way from a single source. Some of the Corollaries deserve to be called, or are called, Theorems in their own right. Before coming to the formal and complete statement of the Main Theorem we examine a special case belonging to very familiar territory.
Let $G$ be a finitely generated group and let $H$ be a subgroup that satisfies the following two conditions:
\begin{enumerate}
\item[$\mathbf{A.}$]
$H$ is separable in $G$, meaning that it is an intersection of subgroups of finite index or equivalently closed in the profinite topology.
\item[$\mathbf{B.}$]
$H$ is commensurate with all of its conjugates, meaning that $|H:H\cap gHg^{-1}|$ is finite for all $g\in G$.
\end{enumerate}
Then we prove that there is a subgroup $K$ of $H$ such that
$K$ is normal in $G$ and of finite index in $H$.

This very simple general observation is not difficult to prove. Our original proofs used moderately sophisticated tools from the theory of totally disconnected locally compact groups, but here we present a proof that could be delivered in any beginning graduate course in abstract group theory. Startlingly this leads to simplifications and streamlinings of many results scattered through the literature, as we shall show. 

We say that a subgroup $H$ of a group $G$ is commensurated when it is commensurate with all its conjugates. The commensurator of $H$ is the unique largest subgroup of $G$ in which $H$ is a commensurated subgroup.

For a first example of an application, if $G$ is a polycyclic group then all its subgroups satisfy $\mathbf{A.}$ So any one that satisfies $\mathbf {B}$ is commensurate to a normal subgroup. More generally, our main theorem shows that every subgroup of a polycyclic group is commensurate with a subgroup that is normal in its commensurator, see the implication Theorem 10 (i)$\implies$(iv) below: a significant streamlining of a fundamental lemma of Kropholler \cite{Kropholler}. But the larger picture which involves a theorem of Jeanes and Wilson can also be given a smooth treatment using our Main Theorem.

Further applications can be made to group actions on graphs, metric spaces, and trees, to Baumslag--Solitar groups, the Grigorchuk group and at least one member of the family of Gupta--Sidki groups and more.

Moreover it is rather quickly clear that generalizations are possible: for example if condition $\mathbf{A}$ is not satisfied then one can still apply the result to the profinite closure of the subgroup.
Upon arriving at preliminary forms our Main Theorem it became apparent that profinite closure and separability, while very familar concepts, should be treated alongside the slightly more subtle and less well known concepts of \emph{residual closure} and \emph{weak separability}. This generality created no additional mathematical difficulty. So without further ado, let us proceed to the general form of

\section{The Main Theorem.}

Let $G$ be a group. A subgroup $H$ is called \textbf{virtually normal} if there is a subgroup $K\le H$ that is normal in $G$ and of finite index in $H$. A subgroup is called \textbf{weakly separable} if it is an intersection of virtually normal subgroups. Any intersection of  weakly separable subgroups is weakly separable. Any subgroup $H \leq G$ is thus contained in a unique smallest weakly separable subgroup, denoted by $\widetilde H$ and  called the  \textbf{residual closure} of $H$ in $G$. 

Weakly separable subgroups are a generalization of separable subgroups. A \textbf{separable subgroup} of $G$ is an intersection of subgroups that have finite index in $G$. The \textbf{profinite closure} of a subgroup $H$ in $G$, denoted by $\overline{H}$, is the unique smallest \textit{separable} subgroup containing $H$. The inclusion  $\wt{H} \le \overline{H}$ holds in general, but it  can be strict. For instance, if $G$ is infinite and simple and $H$ is any  finite subgroup,   then $\wt{H} = H$, but $\overline{H} = G$.  

  
\begin{thm*}\label{thm:ProfiniteDensity}
Let $G$ be a   group and $H \leq G$ be a commensurated subgroup. Assume that $G$ is generated by finitely many cosets of $H$. Then 
$$N  := \bigcap_{g \in G} g \widetilde H g^{-1}$$
is of finite index in the residual closure $\widetilde H$ of $H$.  Additionally, $N$ is a normal subgroup of $G$ such that $[H: N \cap H]< \infty$, $N = \widetilde{N \cap H}$ and $\widetilde{H} = NH$.
\end{thm*}

Basically, in words and in the case when the ambient group is finitely generated, it says that the residual closure of a commensurated subgroup is virtually normal.

For the proof, we need a couple of lemmas to show that the operation of taking the residual closure is well behaved. Define $\mathcal{N}(G,H)$ to be the set of normal subgroups of $G$ that contain a finite index subgroup of $H$. In particular  $\mathcal{N}(G,G)$ is the set of normal subgroups of finite index in $G$. The following basic fact will be used repeatedly.

\begin{lem}\label{lem:N(G,H)}
Let $G$ be a group and $H \leq G$ be a subgroup. Then 
$$\widetilde{H} = \bigcap_{N \in \mathcal{N}(G,H)}NH, 
\hspace{1cm}\text{and} \hspace{1cm}
\overline{H} = \bigcap_{N \in \mathcal{N}(G,G)}NH.$$
\end{lem}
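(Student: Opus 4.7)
The plan is to reduce both equalities to a single observation: virtually normal subgroups containing $H$ correspond, up to finite-index sandwiching, to products $NH$ with $N$ normal in $G$ and $[H:N\cap H]$ finite. Once this is in place, both identities drop out by intersecting.

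First I would rewrite $\widetilde H$ as the intersection of all virtually normal subgroups of $G$ that contain $H$. Indeed, any weakly separable subgroup containing $H$ is an intersection of virtually normal subgroups, each of which must contain $H$; conversely, the intersection of all virtually normal subgroups containing $H$ is itself weakly separable, contains $H$, and is visibly smallest among such. (The analogous fact for $\overline H$ is that it equals the intersection of all finite index subgroups of $G$ containing $H$.) This preliminary is immediate from the definitions.

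Next, I would show two things. (a) For every $N \in \mathcal{N}(G,H)$ the subgroup $NH$ is virtually normal in $G$ and contains $H$: indeed $N \trianglelefteq G$, $N \leq NH$, and $[NH:N] = [H:N\cap H]$ is finite by hypothesis. (b) Conversely, for every virtually normal subgroup $V$ of $G$ with $H \leq V$, there exists $N \in \mathcal{N}(G,H)$ with $N \leq V$ and $NH \leq V$. For this, pick $N \trianglelefteq G$ with $N \leq V$ and $[V:N]$ finite, which exists by definition of virtual normality; then $H/(H\cap N)$ embeds into $V/N$, so $[H:N\cap H] \leq [V:N] < \infty$, showing $N \in \mathcal{N}(G,H)$, and $NH \leq V$ since both factors lie in $V$.

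Combining (a) and (b) yields
\[
\widetilde H \;=\; \bigcap\{V : V \text{ virtually normal}, H \leq V\} \;=\; \bigcap_{N \in \mathcal{N}(G,H)} NH,
\]
where the first equality is the preliminary reduction, and the second follows because each $NH$ is among the $V$'s by (a), while each $V$ dominates some $NH$ by (b). For the profinite closure I would run the same argument with finite index (not merely virtually normal) subgroups: the products $NH$ for $N \in \mathcal{N}(G,G)$ are finite index subgroups of $G$ containing $H$, and conversely if $F$ is a finite index subgroup containing $H$, its normal core $N := \bigcap_{g \in G} gFg^{-1}$ is a finite index normal subgroup satisfying $NH \leq F$, whence the second identity. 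No step is really an obstacle; the only point requiring care is the index estimate $[H:N\cap H] \leq [V:N]$ in (b), and that is a standard application of the second isomorphism theorem.
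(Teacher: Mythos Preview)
Your proof is correct and follows essentially the same approach as the paper: both argue that each $NH$ with $N\in\mathcal N(G,H)$ is virtually normal containing $H$, and conversely that every virtually normal subgroup $V\supseteq H$ contains some such $NH$ (by taking $N\trianglelefteq G$ of finite index in $V$). You are simply more explicit about the preliminary identification of $\widetilde H$ with the intersection of all virtually normal subgroups containing $H$, and about the normal-core step for the profinite closure, both of which the paper leaves implicit.
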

\begin{proof}
For any $N \in \mathcal{N}(G,H)$, the group $NH$ is a virtually normal subgroup of $G$ containing $H$. Thus 	$\widetilde{H} \leq \bigcap_{N \in \mathcal{N}(G,H)}NH $. 
Conversely, let  $J$ be any virtually normal subgroup of $G$ containing $H$. Let $N$ be a finite index subgroup of $J$ which is normal in $G$. We then have $NH \leq J$ and  $N \in  \mathcal{N}(G,H)$. Hence  	$\widetilde{H} \geq \bigcap_{N \in \mathcal{N}(G,H)}NH $. 

The proof for the profinite closure is similar.
\end{proof}

Two subgroups $H_1, H_2 \leq G$ are called \textbf{commensurate} if their intersection is of finite index in both $H_1$ and $H_2$.

\begin{lem}\label{lem:closure_commensurability}
Let $G$ be a group and let $K \le H \le G$ such that $[H:K]<\infty$.  Then
\[
\mathcal{N}(G,H) = \mathcal{N}(G,K), \;  \widetilde{H} = \widetilde{K}H, \text{ and } \;\overline{H}=\overline{K}H
\]
In particular, if $H_1$ and $H_2$ are commensurate, then $\widetilde{H_1}$ and $\widetilde{H_2}$ are commensurate and  $\overline{H_1}$ and $\overline{H_2}$ are commensurate. If $H$ is commensurated in $G$, then $\widetilde{H}$ is commensurated and $\overline{H}$ is commensurated.
\end{lem}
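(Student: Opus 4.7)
The plan is to establish the three equalities in order, then deduce the commensurability and commensurated consequences formally from them. The overarching strategy is to reduce each claim to a statement about the filtered family $\mathcal{N}(G,K)$ (resp.\ $\mathcal{N}(G,G)$) and combine this with Lemma~\ref{lem:N(G,H)}.

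I would begin with the easiest assertion, namely $\mathcal{N}(G,H)=\mathcal{N}(G,K)$. If $N\in\mathcal{N}(G,H)$ contains a finite-index subgroup $H_0 \le H$, then $H_0\cap K$ is finite-index in $K$ (as $[H:K]<\infty$), so $N\in\mathcal{N}(G,K)$; conversely any finite-index subgroup of $K$ is automatically of finite index in $H$, proving the reverse inclusion. In particular, $\mathcal{N}(G,K)$ is closed under finite intersections (the intersection of two finite-index subgroups of $K$ is finite-index in $K$), so it forms a downward-filtered poset — a fact the next step will exploit.

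The main work is then to prove $\widetilde{H}=\widetilde{K}H$. One inclusion is immediate: for any $N\in\mathcal{N}(G,K)=\mathcal{N}(G,H)$ we have $NH=NKH$ (since $K\le H$), and $\widetilde{K}\subseteq NK$, so
\[
\widetilde{K}H \;\subseteq\; \bigcap_{N\in\mathcal{N}(G,K)}\!NKH \;=\; \bigcap_{N\in\mathcal{N}(G,H)}\!NH \;=\; \widetilde{H}.
\]
The reverse inclusion is the main obstacle, because $\widetilde{K}$ is defined as an intersection that does not automatically commute with the multiplication by $H$ on the right. I would handle it by a pigeonhole/compactness argument: fix coset representatives $h_1,\dots,h_m$ of $K$ in $H$, let $x\in\widetilde{H}$, and define
\[
S_N \;=\; \{\, i\in\{1,\dots,m\} : x h_i^{-1}\in NK\,\}\subseteq\{1,\dots,m\}.
\]
Each $S_N$ is non-empty (since $x\in NH=NKh_1\cup\cdots\cup NKh_m$) and the assignment $N\mapsto S_N$ is monotone: if $N'\subseteq N$ then $S_{N'}\subseteq S_N$. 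Because $\mathcal{N}(G,K)$ is downward-filtered and $\{1,\dots,m\}$ is finite, the finite intersection property forces $\bigcap_N S_N\neq\emptyset$; any $i^*$ in this intersection satisfies $xh_{i^*}^{-1}\in\bigcap_N NK=\widetilde{K}$, whence $x\in\widetilde{K}H$. The proof of $\overline{H}=\overline{K}H$ is formally identical, replacing $\mathcal{N}(G,K)$ by the downward-filtered family $\mathcal{N}(G,G)$.

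For the two consequences, suppose $H_1,H_2$ are commensurate and set $K=H_1\cap H_2$, so $[H_i:K]<\infty$ for $i=1,2$. Applying $\widetilde{H}=\widetilde{K}H$ to each inclusion $K\le H_i$ shows $\widetilde{K}\le\widetilde{H_i}$ with $[\widetilde{H_i}:\widetilde{K}]=[H_i:H_i\cap\widetilde{K}]\le[H_i:K]<\infty$, so $\widetilde{H_1}$ and $\widetilde{H_2}$ share the finite-index subgroup $\widetilde{K}$ and are therefore commensurate; the same computation with bars proves the profinite statement. Finally, if $H$ is commensurated in $G$, then for every $g\in G$ the subgroup $gHg^{-1}$ is commensurate with $H$. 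Since conjugation by $g$ is an automorphism of $G$ permuting the class of virtually normal (resp.\ finite-index) subgroups, one has $g\widetilde{H}g^{-1}=\widetilde{gHg^{-1}}$ and $g\overline{H}g^{-1}=\overline{gHg^{-1}}$, and the commensurability of the closures just established gives the commensurability of $g\widetilde{H}g^{-1}$ with $\widetilde{H}$ and of $g\overline{H}g^{-1}$ with $\overline{H}$ for all $g\in G$.
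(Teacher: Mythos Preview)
Your proof is correct and follows essentially the same approach as the paper: both establish $\mathcal{N}(G,H)=\mathcal{N}(G,K)$ trivially, and for the hard inclusion $\widetilde{H}\subseteq\widetilde{K}H$ both fix coset representatives of $K$ in $H$ and use the filtered structure of $\mathcal{N}(G,K)$ to find a representative that works for every $N$. Your finite-intersection-property formulation is a slight streamlining of the paper's argument, which first reduces to the case $K=\widetilde{K}\cap H$ and then selects a single $M\in\mathcal{N}(G,K)$ separating all the coset representatives before working below $M$; your version bypasses that reduction by not insisting on uniqueness of the index.
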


\begin{proof}
For any subgroup $N\leq G$, $[K:K \cap N] < \infty$ if and only if $[H :H \cap N] < \infty$. We infer that $\mathcal{N}(G,H) = \mathcal{N}(G,K)$.  

For the remaining claims, the cases of the residual closure and the profinite closure are similar. We therefore only consider the residual closure.  Clearly,
\[
K \leq \widetilde K \cap H \leq \widetilde K,
\]
so $\widetilde K = \widetilde{\widetilde K \cap H}$.  In order to prove that $\widetilde{H} = \widetilde{K}H$,  we may thus assume that $K = \widetilde{K} \cap H$.  

Write $H$ as a disjoint union of right cosets $Kt_1,\dots,Kt_n$ of $K$. For each $1 \le i < j \le n$, there is some $N \in \mathcal{N}(G,H)$ such that $t_it\inv_j \not\in NK$ via Lemma~\ref{lem:N(G,H)}, since $K = \widetilde{K} \cap H$ and $\mathcal{N}(G,H) = \mathcal{N}(G,K)$.  The set $\mathcal{N}(G,H)$ is closed under finite intersections, so there is some $M \in \mathcal{N}(G,H)$ such that all of the cosets $MKt_1,\dots,MKt_n$ are distinct. Fixing such an $M$, Lemma~\ref{lem:N(G,H)} ensures that we can write $\widetilde{H}$ as
\[
\bigcap_{N \in \mc{M}}NH
\]
where $\mc{M}$ is the set of $N \in \mathcal{N}(G,H)$ such that $N \le M$.

Take $x \in \widetilde{H}$. We have $x \in MH$, so $x \in MKt_i$ for exactly one $i$.  For each $N \in \mc{M}$, it is also the case that $x \in NH$, so $x \in NKt_i$.  Therefore,
\[
x \in \left(\bigcap_{N \in \mc{M}}NK\right)t_i,
\]
and the latter set is exactly the coset $\widetilde{K}t_i$ by Lemma~\ref{lem:N(G,H)}.  We conclude that $x \in \widetilde{K}H$. Thus, $\widetilde{H}\subseteq \widetilde{K}H$, and as $\widetilde{K}H\subseteq \widetilde{H}$, the proof is complete.
\end{proof}

\begin{lem}\label{lem:closure_normal}
Let $G$ be a group and let $H \le G$.
\begin{enumerate}[label=(\roman{*})]
\item  For any  $N \in \mc{N}(G,H)$, we have $N \cap \widetilde{H} = \widetilde{N \cap H}$ and $\mc{N}(G,H) = \mc{N}(G,\widetilde{H})$. 
\item For any $N\in \mc{N}(G,G)$, we have $N \cap \overline{H} = \overline{N \cap H}$. 
\end{enumerate}
\end{lem}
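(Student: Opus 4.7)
The plan is to reduce both parts to one elementary set-theoretic identity: if $N \leq G$ is a subgroup, $A \subseteq N$, and $H \leq G$, then $N \cap (AH) = A(N \cap H)$. The inclusion $\supseteq$ is obvious; for $\subseteq$, any $ah \in N$ with $a \in A \subseteq N$ forces $h = a^{-1}(ah) \in N \cap H$. The other key ingredient is the observation that in each case the closure of $N \cap H$ will sit inside $N$.

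For part (i), I would first set $K := N \cap H$. Since $N \in \mc{N}(G,H)$, $K$ has finite index in $H$, so Lemma~\ref{lem:closure_commensurability} gives $\widetilde{H} = \widetilde{K} H = \widetilde{N \cap H} \cdot H$. Because $N$ is normal in $G$, it is virtually normal and hence weakly separable, and since it contains $N \cap H$ it must also contain $\widetilde{N \cap H}$. Applying the identity above with $A = \widetilde{N \cap H}$ then yields $N \cap \widetilde{H} = \widetilde{N \cap H} \cdot (N \cap H) = \widetilde{N \cap H}$, using $N \cap H \subseteq \widetilde{N \cap H}$ in the final step.

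For the equality $\mc{N}(G, H) = \mc{N}(G, \widetilde{H})$, the inclusion $\supseteq$ follows from $H \leq \widetilde{H}$, which gives $[H : M \cap H] \leq [\widetilde{H} : M \cap \widetilde{H}]$ for every subgroup $M$. For the reverse inclusion, I would take $M \in \mc{N}(G, H)$, apply the identity just established to conclude $M \cap \widetilde{H} = \widetilde{M \cap H}$, and then invoke the commensurability part of Lemma~\ref{lem:closure_commensurability}: since $M \cap H$ is commensurate with $H$, the residual closures $\widetilde{M \cap H}$ and $\widetilde{H}$ are commensurate, so $[\widetilde{H} : M \cap \widetilde{H}] < \infty$ and $M \in \mc{N}(G, \widetilde{H})$.

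For part (ii), the argument is identical with $\widetilde{\cdot}$ replaced by $\overline{\cdot}$ and using the profinite-closure halves of Lemmas~\ref{lem:N(G,H)} and~\ref{lem:closure_commensurability}. The only step requiring a slightly different justification is the containment $\overline{N \cap H} \subseteq N$; here I would use the stronger hypothesis that $N \in \mc{N}(G, G)$ has finite index in $G$, which makes $N$ itself a separable subgroup containing $N \cap H$, and hence containing its profinite closure. I do not foresee a serious obstacle: the whole proof is bookkeeping around the identity $N \cap (AH) = A(N \cap H)$ combined with the two preceding lemmas. The only delicate point is ensuring that the hypotheses in (i) and (ii) really do force the relevant closure of $N \cap H$ to lie inside $N$, which is what makes the identity applicable.
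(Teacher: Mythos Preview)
Your proof is correct and follows essentially the same strategy as the paper's: both hinge on the Dedekind-type identity $N \cap (AH) = A(N \cap H)$ for $A \subseteq N$, together with Lemma~\ref{lem:closure_commensurability}. The only organizational difference is that the paper starts from the intersection description $\widetilde{H} = \bigcap_{M \le N,\, M \in \mc{N}(G,H)} MH$ of Lemma~\ref{lem:N(G,H)} and distributes $\cap\, N$ through it, whereas you start from the product description $\widetilde{H} = \widetilde{N \cap H}\cdot H$ of Lemma~\ref{lem:closure_commensurability} and use the observation that $N$, being normal, is itself weakly separable so that $\widetilde{N\cap H} \subseteq N$; this lets you apply the identity once rather than termwise, which is mildly cleaner.
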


\begin{proof}
We prove (i). Claim (ii) is similar.

Take  $N \in \mc{N}(G,H)$. By Lemma~\ref{lem:N(G,H)}, we can write $\widetilde{H}$ as
\[
\widetilde{H} = \bigcap_{M \in \mc{M}}MH
\]
where $\mc{M}$ is the set of elements of $\mc{N}(G,H)$ contained in $N$.  Then
\[
\widetilde{H} \cap N = \bigcap_{M \in \mc{M}}(MH \cap N) = \bigcap_{M \in \mc{M}}M(H \cap N) = \widetilde{H \cap N}
\]
where the last equality follows Lemmas~\ref{lem:N(G,H)} and \ref{lem:closure_commensurability}. Applying Lemma~\ref{lem:closure_commensurability} again, $\widetilde{H \cap N}$ has finite index in $\widetilde{H}$, so $N \in \mc{N}(G, \widetilde H)$.  We conclude that $\mc{N}(G,H) \subseteq \mc{N}(G,\widetilde{H})$. The reverse inclusion  is  clear.
\end{proof}

\begin{proof}[Proof of the Main Theorem]
Set $N: = \bigcap_{g \in G} g \widetilde H g^{-1}$. Suppose for the moment that $N$ has finite index in $\widetilde{H}$. We infer that $N \in \mc{N}(G,H)$, so $\widetilde{H} \le NH$. Since $N,H \le \widetilde{H}$, we indeed have that $\widetilde{H} = NH$. Lemma~\ref{lem:closure_normal} implies additionally that $N = \widetilde{N \cap H}$. To prove the theorem, it thus suffices to show that $N$ has finite index in $\widetilde{H}$. By Lemma~\ref{lem:closure_commensurability}, $\widetilde{H}$ is a commensurated subgroup of $G$, so we may assume that $H = \widetilde{H}$.

For $S$ a subset of $G$, we define
\[
H_S:=\bigcap_{s\in S}sHs^{-1}.
\]
If $1\in S$  and $S$ is finite, then $H_S$ has finite index in $H$. The group $H_S$ is weakly separable, since $H$ is weakly separable. Letting $H_St_1,\dots,H_St_n$ list right coset representatives of $H_S$ in $H$, there is some $V\in \mathcal{N}(G,H_S)$ such that $VH_S$ does not contain $t_it_j^{-1}$ for any $i\neq j$, since $\bigcap_{N\in \mathcal{N}(G,H_S)}NH_S=H_S$ by Lemma~\ref{lem:N(G,H)}. It now follows that $H\cap VH_S=H_S$, hence $(H\cap V)H_S=H_S$. We thus deduce that  $ H\cap V \leq H_S$ with $V\in \mathcal{N}(G,H)$; that $V\in \mathcal{N}(G,H)$ is given by Lemma~\ref{lem:closure_commensurability}. We now obtain the following.
 
\begin{enumerate}
	\item[(A)] For all finite $S\subset G$ with $1\in S$, there is $V \in \mathcal{N}(G,H)$ such that for all $T$ with $1 \in T$ and $T\subseteq S$, we have
	$$(H\cap V)H_T = H\cap VH_T=H_T.$$
\end{enumerate}

Fix $V \in \mathcal{N}(G,H)$. Let $\XX(V)$ denote the collection of all finite subsets $S$ of $G$ containing $1$ such that  
$$ H\cap VH_S=H_S.$$ 
Suppose that $S$ and $T$ are subsets of $G$ and $x$ is an element of $G$. We shall prove the following claim.
\begin{enumerate}
	\item[(B)] If $S$, $T$ and $\{1,x\}$ all belong to $\XX(V)$ then $S\cup xT$ belongs to $\XX(V)$.
\end{enumerate}
We have
$$H\cap VH_S=H_S,\eqno(1)$$
$$H\cap VH_T=H_T, \text{ and }\eqno(2)$$
$$H\cap V(H\cap xHx^{-1})=H\cap xHx^{-1}.\eqno(3)$$
Conjugating (2) by $x$ yields
$$xHx^{-1}\cap VH_{xT}=H_{xT}\eqno(4)$$
Intersecting with (1), we obtain
$$H\cap xHx^{-1}\cap VH_S\cap VH_{xT} =H_S\cap H_{xT}=H_{S\cup xT}\eqno(5)$$
Using (3) to substitute for $H\cap xHx^{-1}$ on the left hand side of (5), we obtain
$$H\cap V(H\cap xHx^{-1})\cap VH_S\cap VH_{xT}=H_{S\cup xT}.\eqno(6)$$
\textit{A fortiori}, noting that $1\in T$ and $1\in S$, we have 
$$H_{S\cup xT}\subseteq V(H\cap xHx^{-1})\cap VH_S\cap VH_{xT},$$ and therefore,
$$H\cap VH_{S\cup xT}\subseteq H\cap V(H\cap xHx^{-1})\cap VH_S\cap VH_{xT}.$$
Combining this with (6), we obtain claim (B).

\medskip
Let $Y$ be a finite subset of $G$ such that $1\in Y=Y^{-1}$ and $G=\langle YH\rangle$. Since $H$ is commensurated, every right $H$-coset is contained in a finite union of left $H$-cosets. There is thus a finite subset $X$ of $G$ such that $1\in X=X^{-1}$ and that $HYH = XH$. In particular, we have $HXH = XH$. It follows that $(XH)^n = X^n H$ for all $n$. Hence, $G = \langle X \rangle H$. 

Using (A) we can choose $V$ in $\mc{N}(G,H)$ such that $\XX(V)$ contains $X$ as well as $\{1,x\}$ for all $x \in X$.   By iterated application of (B), we see that $X^n: =\{x_1\dots x_n \mid x_i \in X\}$ belongs to $\XX(V)$ for all $n >0$. We deduce that
$$H \cap V \leq \bigcap_{n\geq 1} H\cap VH_{X^n} =  \bigcap_{n\geq 1} H_{X^n} = \bigcap_{g \in \langle X \rangle} gHg^{-1} \leq H.$$
Since $G = \langle X \rangle H$, the group $\bigcap_{g \in \langle X \rangle} gHg^{-1} = \bigcap_{g \in G} gHg^{-1} $ is normal in $G$,   and its   index in $H$ is bounded above by $[H: H \cap V]$. The result follows.
\end{proof}

We immediately recover a result of Caprace and Monod, which was in fact the original inspiration for the Main Theorem.  A locally compact group is called \textbf{residually discrete} if the intersection of all its open normal subgroups is trivial.

\begin{cor}[Caprace--Monod, {\cite[Corollary 4.1]{CM11}}]\label{prop:ResDiscr->SIN}
A compactly generated 
and 
totally disconnected locally compact group is residually discrete if and only if it has a basis of identity neighborhoods consisting of compact open normal subgroups. 
\end{cor}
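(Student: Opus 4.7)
The easy direction is immediate: if $G$ has a basis of identity neighborhoods consisting of compact open normal subgroups, then these are in particular open normal subgroups with trivial intersection, so $G$ is residually discrete.

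For the converse, assume $G$ is a compactly generated \tdlc group that is residually discrete. By van Dantzig's theorem the compact open subgroups of $G$ form a basis of identity neighborhoods, so it suffices to prove that every compact open subgroup $U \leq G$ contains a compact open normal subgroup of $G$. Fix such a $U$. Then $U$ is commensurated in $G$ (since each $gUg^{-1}$ is compact open, hence $U \cap gUg^{-1}$ is open in the compact group $U$ and thus of finite index), and $G$ is generated by finitely many cosets of $U$ (a compact generating set meets only finitely many left cosets of the open subgroup $U$). Hence the Main Theorem applies.

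The crux of the plan is to show $\widetilde{U} = U$. For any open normal subgroup $M$ of $G$, the product $MU$ is a virtually normal subgroup of $G$ containing $U$, because $[MU : M] = [U : U \cap M] < \infty$ (since $U \cap M$ is open in the compact $U$). Therefore $\widetilde{U} \subseteq \bigcap_M MU$ as $M$ ranges over open normal subgroups of $G$. Given $u \in \widetilde{U}$, the coset $uU$ meets every such $M$, so the sets $\{uU \cap M\}_M$ form a downward-directed family of nonempty closed subsets of the compact set $uU$. By compactness the total intersection is nonempty, and by residual discreteness it equals $uU \cap \bigcap_M M = uU \cap \{1\}$. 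Thus $1 \in uU$, so $u \in U$, establishing $\widetilde{U} = U$.

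Finally, the Main Theorem produces a normal subgroup $N \leq G$ of finite index in $\widetilde{U} = U$. Being an intersection of $G$-conjugates of the closed subgroup $U$, $N$ is closed; being of finite index in the compact open subgroup $U$, it is also open. Hence $N$ is a compact open normal subgroup of $G$ contained in $U$, completing the proof. I expect the finite-intersection compactness argument for $\widetilde{U} = U$ to be the main conceptual step; everything else follows mechanically from van Dantzig's theorem and the Main Theorem.
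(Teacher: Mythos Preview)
Your proof is correct and follows essentially the same approach as the paper: the finite-intersection compactness argument you spell out is exactly what the paper invokes as the ``standard compactness argument'' yielding $\bigcap_{M} MU = \big(\bigcap_{M} M\big)U = U$. The only cosmetic difference is that the paper takes the closure of the normal subgroup $N$ at the end, whereas you note directly that $N = \bigcap_{g} gUg^{-1}$ is already closed.
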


\begin{proof}
Let $G$ be a compactly generated totally disconnected locally compact group, $O$ be an identity neighborhood in $G$, and  $\mc{N}$ be the set of open normal subgroups of $G$.  By Van Dantzig's theorem, $O$ contains a compact open subgroup $H$. That $H$ is compact ensures that $[H:H \cap N]< \infty$ for all $N \in \mc{N}$, and via a standard compactness argument,
\[
\bigcap_{N \in \mc{N}}NH = \left(\bigcap_{N \in \mc{N}}N\right)H.
\]
Additionally, since $G$ is compactly generated, $G$ is generated by finitely many cosets of $H$. 

If $G$ is residually discrete, i.e.\ $\bigcap_{N \in \mc{N}}N = \triv$, then $H$ is weakly separable. Being compact and open, $H$ is commensurated, so by the Main Theorem, $H$ contains a normal subgroup $N$ of $G$ that is of finite index in $H$. The closure of $N$ is a compact open normal subgroup of $G$ contained in $H$. Conversely, if $G$ is not residually discrete, then $G$ certainly cannot have a basis of identity neighborhoods consisting of compact open normal subgroups.
\end{proof}

\begin{rmk}
\emph{Unlike the profinite closure, the residual closure is not a closure with respect to a fixed group topology on $G$. However, the residual closure of a \textit{commensurated} subgroup $H$ can be recovered as a closure with respect to some group topology on $G$. Indeed, for $H$ commensurated, there is a canonical locally compact group $\hat{G}_H$ and homomorphism $\alpha: G \rightarrow \hat{G}_H$ such that $\alpha$ has dense image and every finite index subgroup of $H$ is the preimage of a compact open subgroup of $\hat{G}_H$. The group $\hat{G}_H$ is called the Belyaev completion of $(G,H)$; see \cite[\S7]{Belyaev}.  Let $R$ be the intersection of all open normal subgroups of $\hat{G}_H$ and let $\beta: \hat{G}_H \rightarrow \hat{G}_H/R$ be the quotient map.  The subgroup $\wt{H}$ is then the closure of $H$ in the topology induced by $\beta \circ \alpha$ on $G$. In other words, we take the closure in the coarsest group topology on $G$ such that $\beta \circ \alpha$ is continuous.}

\emph{An alternative proof of the Main Theorem can be derived by following this line of reasoning and applying the aforementioned result of Caprace and Monod to the compactly generated totally disconnected group $\hat{G}_H/R$.}
\end{rmk}

\section{Applications to subgroup separability}

We here use the Main Theorem to study separable subgroups. Let us first observe a restatement of the Main Theorem for the profinite closure.

\begin{cor}\label{cor:maintheorem_sep}
Let $G$ be a   group and $H \leq G$ be a commensurated subgroup. Assume that $G$ is generated by finitely many cosets of $H$. Then 
\[
N  := \bigcap_{g \in G} g \overline{H} g^{-1}
\]
is of finite index in the profinite closure $\overline H$ of $H$.  Additionally, $N$ is a normal subgroup of $G$ such that $[H: N \cap H]< \infty$, $N = \overline{N \cap H}$ and $\overline{H} = NH$.
\end{cor}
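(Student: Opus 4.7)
The plan is to parallel the proof of the Main Theorem almost verbatim, replacing $\widetilde{H}$ by $\overline{H}$ and, crucially, replacing the family $\mathcal{N}(G,H)$ by the narrower family $\mathcal{N}(G,G)$ of finite-index normal subgroups of $G$ throughout. Lemmas \ref{lem:N(G,H)}, \ref{lem:closure_commensurability} and \ref{lem:closure_normal} already supply the relevant profinite analogues. As in the Main Theorem, I would first reduce to the case $H = \overline{H}$: by Lemma \ref{lem:closure_commensurability}, $\overline{H}$ is commensurated, and $G$ is still generated by finitely many cosets of $\overline{H}$ since $H \leq \overline{H}$.

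With this reduction in hand, set $H_S := \bigcap_{s \in S} s H s^{-1}$ for finite $S \subseteq G$ with $1 \in S$. Since $H = \overline{H}$, each conjugate $s H s^{-1} = \overline{sHs^{-1}}$ is separable, so the finite intersection $H_S$ is separable; Lemma \ref{lem:N(G,H)} then gives $H_S = \bigcap_{V \in \mathcal{N}(G,G)} V H_S$. Listing the right cosets $H_S t_1, \dots, H_S t_n$ of $H_S$ in $H$, one takes $V \in \mathcal{N}(G,G)$ as a finite intersection of members of $\mathcal{N}(G,G)$ that separates every $t_i t_j^{-1}$ from $H_S$; this yields the profinite analogue of step (A), namely $(H \cap V) H_T = H \cap V H_T = H_T$ for all $T \subseteq S$ containing $1$. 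Step (B) is a formal manipulation and carries over unchanged; iterating it over the powers $X^n$ of a finite symmetric $X \subseteq G$ with $HXH = XH$ and $G = \langle X \rangle H$ (constructed as in the Main Theorem from commensuration of $H$ and finite generation modulo $H$) gives $H \cap V \leq \bigcap_{g \in G} gHg^{-1} =: N$, whence $[H:N] < \infty$.

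For the remaining conclusions, now stated for the original $H$: the subgroup $N$ is normal in $G$ by construction, and $N = \overline{N}$ since $N$ is an intersection of separable conjugates $\overline{gHg^{-1}}$. The injection $H/(H \cap N) \hookrightarrow \overline{H}/N$ yields $[H : H \cap N] < \infty$. Lemma \ref{lem:closure_commensurability} applied to $K := H \cap N \leq H$ gives $\overline{H} = \overline{H \cap N} \cdot H$; combined with $\overline{H \cap N} \leq \overline{N} = N \leq \overline{H}$ this forces $\overline{H} = NH$, and then the identity $[\overline{H}:\overline{H \cap N}] = [H : H \cap \overline{H \cap N}] = [H: H \cap N] = [\overline{H}:N]$ (using $H \cap \overline{H \cap N} = H \cap N$, since $\overline{H \cap N} \leq N$) forces $N = \overline{H \cap N}$. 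The main obstacle, and the only genuinely new point compared with the Main Theorem, is that step (A) now requires $V \in \mathcal{N}(G,G)$ rather than merely $V \in \mathcal{N}(G,H)$; this stronger requirement is what the separability of $H_S$ in the reduced setting makes possible.
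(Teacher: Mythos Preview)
Your proposal is correct, but it reruns the entire combinatorial argument of the Main Theorem when the paper instead applies the Main Theorem as a black box. The paper's key observation is that $L:=\overline{H}$, being separable, is a fortiori weakly separable, hence $\widetilde{L}=L$; since $L$ is commensurated (Lemma~\ref{lem:closure_commensurability}), the Main Theorem applied to $L$ immediately yields $[\overline{H}:N]<\infty$. The remaining conclusions are then read off from Lemmas~\ref{lem:N(G,H)}--\ref{lem:closure_normal}: since $N$ has finite index in $\overline{H}$ it is a finite intersection of conjugates of $\overline{H}$, hence separable; one then picks $M\in\mathcal{N}(G,G)$ with $M\cap\overline{H}=N$ and invokes Lemma~\ref{lem:closure_normal}(ii) to get $N=\overline{M\cap H}=\overline{N\cap H}$, and Lemma~\ref{lem:closure_commensurability} gives $\overline{H}=NH$.

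Your route---replaying steps (A) and (B) with $\mathcal{N}(G,G)$ in place of $\mathcal{N}(G,H)$---works because in the reduced setting $H=\overline{H}$ the subgroups $H_S$ are genuinely separable, so the separating $V$ can be chosen in $\mathcal{N}(G,G)$. This buys nothing extra over the paper's argument, though: once one notices $\widetilde{\overline{H}}=\overline{H}$, the Main Theorem already encapsulates exactly this computation. Your derivation of $N=\overline{N\cap H}$ via the index equality $[\overline{H}:\overline{H\cap N}]=[H:H\cap N]=[\overline{H}:N]$ is a nice alternative to the paper's use of Lemma~\ref{lem:closure_normal}(ii), and is arguably more self-contained.
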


\begin{proof}
By Lemma~\ref{lem:closure_commensurability}, $L:=\overline{H}$ is a commensurated subgroup. Since finite index subgroups are virtually normal, $\widetilde{L}=L$, so $L$ is weakly separable. Applying the Main Theorem, $N  := \bigcap_{g \in G} g L g^{-1}$ has finite index in $L$. In particular, $[H: N \cap H]< \infty$.

Since $N$ has finite index in $L$ it follows that some \emph{finite} intersection of conjugates of $L$ already coincides with $N$. That is, there are $g_1,\dots,g_n$ such that $N=\bigcap_{i=1}^ng_iLg_i^{-1}$ and therefore that $N=\overline{N}$. In view of Lemma~\ref{lem:N(G,H)}, we may find $M\in \mc{N}(G,G)$ such that $M\cap \overline{H}=N$. Hence, $M\cap H=N\cap H$, and Lemma~\ref{lem:closure_normal} implies that that $\overline{N\cap H}=N$. 

For the final claim, Lemma~\ref{lem:closure_commensurability} ensures that $(\overline{N\cap H})H=\overline H$. By the previous paragraph, we deduce that $NH=\overline H$, completing the proof.
\end{proof}

In view of the Main theorem and Corollary~\ref{cor:maintheorem_sep}, we deduce the following.

\begin{cor}\label{cor:Commens=>Insep}
Let $G$ be a group and $H$ be a  commensurated subgroup such that $G$ is generated by finitely many cosets of $H$.
\begin{enumerate}[label=(\roman{*})]
\item If $H$ is weakly separable, then there is a subgroup $N$ of $H$ that is normal  in $G$ and has finite index in $H$.	
\item If $H$ is separable, then there is a subgroup $N$ that is both normal and separable in $G$ and of finite index in $H$.
\end{enumerate}
\end{cor}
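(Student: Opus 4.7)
The plan is to observe that both claims follow directly by specializing the Main Theorem and Corollary~\ref{cor:maintheorem_sep} to the case where $H$ already equals its respective closure.

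For part (i), weak separability of $H$ means that $\widetilde{H} = H$. Under the standing hypothesis that $G$ is generated by finitely many cosets of the commensurated subgroup $H$, the Main Theorem applies and yields that
\[
N := \bigcap_{g \in G} g\widetilde{H}g^{-1} = \bigcap_{g \in G} gHg^{-1}
\]
is a normal subgroup of $G$ that has finite index in $\widetilde{H} = H$. This is precisely the assertion of (i).

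For part (ii), separability is a stronger condition, so we instead invoke Corollary~\ref{cor:maintheorem_sep}. Here $\overline{H} = H$, and the corollary gives that the normal core $N := \bigcap_{g \in G} g\overline{H}g^{-1} = \bigcap_{g \in G} gHg^{-1}$ has finite index in $\overline H = H$ and satisfies $N = \overline{N \cap H}$. The only subtle point to spell out is that since $N \le H$, we have $N \cap H = N$, hence $N = \overline{N}$, which means exactly that $N$ is separable in $G$. This gives the desired finite index subgroup that is simultaneously normal and separable.

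There is no real obstacle to surmount here: both parts are one-line consequences of results already established in the excerpt. The only step that requires any care is noticing in (ii) that $N \le H$ forces $N \cap H = N$, so that the identity $N = \overline{N \cap H}$ provided by Corollary~\ref{cor:maintheorem_sep} directly witnesses the separability of $N$.
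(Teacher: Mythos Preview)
Your proof is correct and follows exactly the approach indicated in the paper, which simply states that the corollary is deduced from the Main Theorem and Corollary~\ref{cor:maintheorem_sep} without spelling out the details. Your observation that $N \le H$ forces $N \cap H = N$, so that $N = \overline{N \cap H} = \overline{N}$ witnesses separability, is the only detail worth making explicit, and you have done so.
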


\begin{cor}\label{cor:TrivCore}
Let $G$ be a group and $H$ be a  commensurated subgroup such that $G$ is generated by finitely many cosets of $H$. If $H$ is weakly separable and $\bigcap_{g \in G} g H g^{-1} = \{1\}$, then  $H$ is finite.
\end{cor}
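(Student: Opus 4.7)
The proof is essentially a direct application of the Main Theorem, so the plan is brief. The key observation is that the hypothesis ``$H$ is weakly separable'' means precisely that $\widetilde H = H$, since by definition $\widetilde H$ is the smallest weakly separable subgroup containing $H$.

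First I would invoke the Main Theorem with this $H$: the hypotheses (commensurated, and $G$ generated by finitely many cosets of $H$) are exactly what is assumed, so the theorem yields that
\[
N := \bigcap_{g \in G} g \widetilde H g^{-1}
\]
is of finite index in $\widetilde H$. Substituting $\widetilde H = H$ (from weak separability), we obtain $N = \bigcap_{g \in G} gHg^{-1}$, and this subgroup has finite index in $H$.

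Then I would apply the remaining hypothesis: by assumption, $\bigcap_{g \in G} gHg^{-1} = \{1\}$, so $N = \{1\}$. Combining with the previous step, $\{1\}$ is of finite index in $H$, which is exactly to say that $H$ is finite.

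There is no real obstacle here; the content of the corollary is entirely packaged inside the Main Theorem, and the only thing to verify explicitly is the identification $\widetilde H = H$ under the weak separability hypothesis, which is immediate from the definition of the residual closure.
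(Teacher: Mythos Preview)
Your proof is correct and matches the paper's approach. The paper states this corollary without proof, as an immediate consequence of the Main Theorem (or equivalently of Corollary~\ref{cor:Commens=>Insep}(i)); your argument spells out exactly the intended deduction.
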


Given $H \leq G$, we denote by $\mathrm{Comm}_G(H)$ the set of those $g \in G$ such that $H$ and $g H g^{-1}$ are commensurate.

\begin{cor}\label{cor:ERF}
Let $G$ be a group. Let  $H ,  J \leq G$ be subgroups such that $J$ is  finitely generated   and that  $J \leq \mathrm{Comm}_G(H)$. If  $H$ is weakly separable in $G$, there exists a subgroup $K \leq H$ that is normal in $\langle J \cup H \rangle$ and of finite index in $H$. 
\end{cor}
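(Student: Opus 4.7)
The plan is to apply Corollary~\ref{cor:Commens=>Insep}(i) to the subgroup $G_0 := \langle J \cup H \rangle$, with $H$ playing the role of the commensurated subgroup of the ambient group $G_0$. Since that corollary requires three hypotheses on $H$ inside the ambient group (commensuration, finite coset generation, and weak separability), the work is a routine check that each hypothesis transfers from $G$ to $G_0$.

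First, I would verify that $H$ is commensurated in $G_0$. Trivially $H \le \mathrm{Comm}_G(H)$, and by hypothesis $J \le \mathrm{Comm}_G(H)$. Since $\mathrm{Comm}_G(H)$ is a subgroup of $G$, we conclude $G_0 \le \mathrm{Comm}_G(H)$; in particular every element of $G_0$ commensurates $H$.

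Second, choosing a finite generating set $j_1,\dots,j_n$ of $J$, I would show that $G_0$ is generated by the finitely many cosets $j_1 H,\dots,j_n H$. The subgroup generated by these cosets automatically contains $H$, because for any $h \in H$ we have $j_1 h \in j_1 H$ and $j_1^{-1} \in j_1^{-1}(j_1 H)$, so $h = j_1^{-1}(j_1 h)$ lies in the generated subgroup; it then contains each $j_i$ as well, hence all of $G_0$. Third, I would show that the weak separability of $H$ is inherited: if $H = \bigcap_\alpha V_\alpha$ with each $V_\alpha$ virtually normal in $G$, pick a finite-index subgroup $N_\alpha \trianglelefteq G$ with $N_\alpha \le V_\alpha$; then $N_\alpha \cap G_0$ is normal in $G_0$ and of finite index in $V_\alpha \cap G_0$, so $V_\alpha \cap G_0$ is virtually normal in $G_0$. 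Since $H \le G_0$, we have $H = \bigcap_\alpha (V_\alpha \cap G_0)$, exhibiting $H$ as weakly separable in $G_0$.

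With all three hypotheses verified, Corollary~\ref{cor:Commens=>Insep}(i), applied to $G_0$ and its commensurated, weakly separable subgroup $H$, yields a finite-index subgroup $K \le H$ which is normal in $G_0 = \langle J \cup H \rangle$, as required. There is no real obstacle here; the content of the corollary is entirely in the Main Theorem, and what remains is only the formal reduction to a finitely-coset-generated ambient group, with the slightly delicate point being the argument that $\{j_i H\}$ genuinely generates $G_0$ as a group.
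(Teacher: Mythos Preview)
Your proof is correct and follows exactly the same approach as the paper: pass to $G_0 = \langle J \cup H\rangle$, observe that $H$ remains weakly separable there, and apply Corollary~\ref{cor:Commens=>Insep}(i). The paper's version is terser and leaves the routine verifications implicit; your only slip is the phrase ``$j_1^{-1} \in j_1^{-1}(j_1 H)$'', which as written asserts $j_1^{-1}\in H$ --- you mean that $j_1^{-1}$ lies in the subgroup generated by $j_1H$ because $j_1 = j_1\cdot 1 \in j_1H$ (or simply add the coset $H$ itself to your list of generators).
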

\begin{proof}
Since $H$ is weakly separable as a subgroup of $G$, it is also weakly separable as a subgroup of $\langle J \cup H\rangle$. The conclusion follows by applying Corollary~\ref{cor:Commens=>Insep} to the   group $\langle J \cup H\rangle $. 
\end{proof}

\section{Applications to polycyclic groups}

The class of virtually polycyclic groups, often referred to as \emph{polycyclic-by-finite} groups coincides with the class all groups that have a series of finite length with cyclic or finite factor groups. Similarly the class of virtually soluble groups coincides with the class of all groups that have a finite series with abelian or finite factor groups.
The main goal of this section is to prove the following characterizations of virtually polycyclic groups within the class of finitely generated virtually soluble groups. In particular, we will recover the known fact, due to S.~Jeanes and J.~Wilson \cite{JeanesWIlson}, that a finitely generated virtually soluble group in which every subgroup that is subnormal of defect at most $2$ is separable, is virtually polycyclic.  This application grew out of discussions between the second author and B.~Nucinkis.

\begin{theorem}[Jeanes--Wilson, \cite{JeanesWIlson}]\label{cor:polyTFAE}
	The following assertions are equivalent for any  finitely generated virtually soluble group $G$.
	\begin{enumerate}[label=(\roman{*})]
		\item $G$ is virtually polycyclic.
		\item Every subgroup of $G$ is separable.
		\item Every subgroup of $G$ that is subnormal of defect at most $2$ is separable. 
		\item To every $H \leq G$ there is a subgroup $K\le H$ that has finite index in $H$ such that $N_G(K) = \mathrm{Comm}_G(H)$.
		\item For all subgroups $H$ of $G$ that are subnormal of defect at most $2$ and all finitely generated  $J \leq   \mathrm{Comm}_G(H)$, there exists a subgroup $K \leq H$ that has finite index in $H$ and is normal in $\langle J \cup H\rangle$. 
	\end{enumerate} 
\end{theorem}

The fact that (i) implies (iv) is {\cite[(3.1)]{Kropholler}}; however, instead of appealing to \cite{Kropholler}, we remark that this implication can be obtained immediately from Corollary~\ref{cor:Commens=>Insep} and a classical theorem of Mal$'$cev, thus giving a cleaner proof than the original argument of the second author (see the proof of Corollary~\ref{cor:polyTFAE} at the end of this section below).

To explain the proof we first require some background on soluble groups of finite rank. The \textbf{Pr\"ufer rank} of a group is the supremum of the minimum number of generators required for each of its finitely generated subgroups. The \textbf{abelian section rank} is the supremum of the minimum number of generators of the   elementary abelian sections, and it is a theorem of Robinson that finitely generated soluble groups with finite abelian section rank are minimax; see \cite[Theorem 1.1]{Robinson75}. Since the Pr\"ufer rank is bounded below by the abelian section rank, finitely generated virtually soluble groups of finite Pr\"ufer rank are minimax. For these reasons the class of minimax groups inevitably plays a central role in any study of soluble groups and associated finiteness conditions.
Recall that a group $G$ is \textbf{virtually soluble and minimax} provided it has a series
$$\{1\}=G_0\triangleleft G_1\triangleleft\dots\triangleleft G_n=G\eqno\dagger$$
in which the factors are cyclic, quasicyclic, or finite. By a \textbf{quasicyclic} group, we mean a group $C_{p^\infty}$, where $p$ is a prime number, isomorphic to the group of $p$-power roots of unity in the field $\mathbf C$ of complex numbers. For a useful alternative point of view, the exponential map $z\mapsto e^{2\pi iz}$ identifies the additive group $\mathbf Z[\frac1p]/\mathbf Z$ with $C_{p^\infty}$. The terminology \textbf{Pr\"ufer $p$-group} is often used to mean the quasicyclic group $C_{p^\infty}$.

For brevity, we write $\mathfrak M$ for the class of virtually soluble minimax groups.
The following important generalization of Robinson's work on soluble groups of finite rank is crucial to our arguments below.

\begin{theorem}[P.~H.~Kropholler, \cite{Kropholler84}]\label{prop:phk}
Every finitely generated soluble group not belonging to $\mathfrak M$ has a section isomorphic, for some prime $p$, to a lamplighter group $C_p\wr\mathbf Z$.
\end{theorem}

The \textbf{Hirsch length} $h(G)$ of an $\mathfrak M$-group $G$ is defined to be the number of infinite cyclic factors in a cyclic--finite--quasicyclic series witnessing the definition above. The $\mathfrak M$-groups of Hirsch length $0$ satisfy the minimal condition on subgroups as can be seen by a straightforward induction on the length of a quasicyclic--finite series. In fact these Hirsch length $0$ $\mathfrak M$-groups are precisely the \textbf{\v{C}ernikov groups}, each being virtually a direct product of finitely many quasicyclic groups by \v{C}ernikov's theorem \cite[1.4.1]{LennoxRobinson}.

It should be noted that the Hirsch length can be defined for any soluble group $G$ by the formula $h(G):=\sum_{j\ge0}\dim_\Qb G^{(j)}/G^{(j+1)}\otimes\Qb$, and more generally for virtually soluble groups by taking the constant value this formula gives on any subgroup of finite index. For this reason the Hirsch length is sometimes known as the \textbf{torsion-free rank}.

The Fitting subgroup of an $\mathfrak M$-group is always nilpotent, and all $\mathfrak M$-groups are virtually nilpotent-by-abelian. Details of these facts are explained in \cite[\S5.2.2]{LennoxRobinson}.  We refer the reader to \cite[Chapter 5]{LennoxRobinson} for further background information.

By lifting generators of the cyclic sections in a cyclic--finite--quasicyclic series for an $\mathfrak M$-group, we see that every $\mathfrak M$-group contains a finitely generated subgroup with the same Hirsch length. 

\begin{lem}\label{lem:pk1}
Let $G$ be an $\mathfrak M$-group and let $H$ be a subgroup with $h(H)=h(G)$. Then $H$ is separable if and only if $[G:H]<\infty$.
\end{lem}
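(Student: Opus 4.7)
The $(\Leftarrow)$ direction is immediate: any finite-index subgroup is its own separating witness.

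For the converse, I would argue by induction on the length $\ell(G)$ of a shortest series with cyclic, quasicyclic, or finite factors (the base case $\ell(G)=0$ being trivial). First, consider the case $h(G)=0$, i.e.\ $G$ is \v{C}ernikov. Let $D$ denote the maximal divisible subgroup: it is characteristic of finite index by \v{C}ernikov's theorem and has no proper finite-index subgroups, so every finite-index subgroup of $G$ contains $D$ via its normal core. Separability of $H$ therefore forces $H\supseteq D$, and hence $[G:H]\le[G:D]<\infty$.

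Now assume $h(G)>0$. After passing to a finite-index soluble subgroup (which preserves all hypotheses), we may assume $G$ is soluble. If $G$ is abelian, then $G/H$ is torsion \v{C}ernikov abelian (as $h(H)=h(G)$) and residually finite (as $H$ is an intersection of finite-index subgroups); since Pr\"ufer groups admit no non-trivial finite quotient, $G/H$ has no Pr\"ufer summand and is therefore finite. If $G$ is non-abelian, let $A:=G^{(k-1)}$ be the last non-trivial term of the derived series of $G$: a proper non-trivial abelian normal subgroup. Then $H\cap A$ is separable in $A$ (restrict the finite-index witnesses of $H$ to $A$), and Hirsch length bookkeeping using $h(H)=h(G)$ and $h(HA)\le h(G)$ forces $h(H\cap A)=h(A)$. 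By the inductive hypothesis applied to $A$ we obtain $[A:H\cap A]<\infty$.

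The crucial step is to show $HA/A$ is separable in $G/A$. Let $y_1,\ldots,y_n\in A$ be coset representatives of $H\cap A$ in $A$, so $HA=\bigsqcup_i Hy_i$. Given $g\notin HA$, one has $gy_i^{-1}\notin H$ for each $i$; by separability of $H$, choose finite-index subgroups $N_i\supseteq H$ with $gy_i^{-1}\notin N_i$, and set $N:=\bigcap_i N_i$. Then $M:=NA$ is a finite-index subgroup of $G$ containing $HA$ but not $g$: writing any $a\in A$ as $a=h'y_i$ with $h'\in H\cap A\le N$, the abelianness of $A$ gives $ga^{-1}=gy_i^{-1}(h')^{-1}$, which lies in $N$ iff $gy_i^{-1}\in Nh'=N$ (since $h'\in N$), and this fails for every $i$ by construction. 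With $HA/A$ separable in $G/A$ of Hirsch length $h(G/A)$, the inductive hypothesis applied to $G/A$ yields $[G:HA]<\infty$; combined with $[HA:H]<\infty$ we conclude $[G:H]<\infty$.

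The main technical obstacle is the coset calculation establishing the separability of $HA/A$ in $G/A$, which requires the abelianness of $A$ and the finiteness of $[A:H\cap A]$ to be used in exactly the right way.
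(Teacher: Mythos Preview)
Your argument is essentially sound and takes a different route from the paper's. The paper inducts on the length $n$ of a fixed $\mathfrak M$-series and peels off the \emph{top} factor: setting $K=G_{n-1}$, induction gives $[K:H\cap K]<\infty$, so $HK$ (a finite-index overgroup of the separable subgroup $H$) is itself separable, and then a three-line case split on whether $G/K$ is finite, infinite cyclic, or quasicyclic finishes the proof. You instead peel off an abelian normal subgroup $A=G^{(k-1)}$ from the \emph{bottom}, handle the \v{C}ernikov and abelian cases directly, and push the rest to $G/A$. That strategy also works, and your ``crucial step'' is just the standard fact that a finite-index overgroup of a separable subgroup is separable (applied to $H\le HA$), which the paper invokes in a single clause for $HK$.

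There is, however, a genuine gap in your induction set-up. Your stated induction variable $\ell(G)$ need not decrease when passing to $G/A$: take $G=S_3\times\mathbf Z$, so that $A=G'=A_3\times\{0\}$ and $G/A\cong C_2\times\mathbf Z$; then $\ell(G)=\ell(G/A)=2$ (each admits a finite--cyclic series of length~$2$, and neither is itself cyclic, quasicyclic, or finite). What \emph{does} decrease is the derived length of (the soluble reduction of) $G$, and that is the variable you should be inducting on; with that correction the argument goes through. One further cosmetic point: the identity $ga^{-1}=gy_i^{-1}(h')^{-1}$ is just $(h'y_i)^{-1}=y_i^{-1}(h')^{-1}$ and requires no commutativity; what you actually use about $A$ in that step is normality, so that $NA$ is a subgroup.
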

\begin{proof} The `only if' direction is all that requires proof.

Note first that if $K$ is any normal subgroup of $G$ then
$$h(G)=h(K)+h(G/K)$$
and
$$h(H)=h(H\cap K)+h(H/H\cap K)=h(H\cap K)+h(HK/K).$$
Since $h(H\cap K)\le h(K)$ and $h(HK/K)\le h(G/K)$ we see that equality between $h(H)$ and $h(G)$ also forces both the equalities $h(H\cap K)= h(K)$ and $h(HK/K)= h(G/K)$.

To prove the Lemma we use induction on the length $n$ of a chain $\dagger$ that is witness to $G\in\mathfrak M$. If the length is zero, then $G$ is trivial, and the result is trivially true. Suppose the length $n$ is greater than zero and let $K:=G_{n-1}$ be the penultimate term. The equality $h(H)=h(G)$ forces $h(H\cap K)=h(K)$ and so by induction $H\cap K$ has finite index in $K$. Therefore $H$ has finite index in $HK$, and $HK$ is separable. There are three cases. If $G/K$ is finite, then $H$ has finite index in $G$, and we are done. If $G/K$ is infinite cyclic, then $HK/K$ must also be infinite cyclic, because $H$ has the same Hirsch length as $G$. Therefore $[G:HK]$ is finite, and again we are done. If $G/K$ is quasicyclic, then $HK/K$, being separable in $G/K$, must be equal to $G/K$ (because quasi-cyclic groups do not have any proper finite index subgroups), so $HK=G$.
\end{proof}

\begin{lem}\label{lem:pk2}
Let $G$ be a group in which the finitely generated subgroups are separable. Then every $\mathfrak M$-subgroup of $G$ is virtually polycyclic.
\end{lem}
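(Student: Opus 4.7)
The plan is to show that every subgroup of an $\mathfrak{M}$-subgroup $M \leq G$ is finitely generated; since a virtually soluble group with this property is polycyclic-by-finite, this will complete the proof. So fix $M \in \mathfrak{M}$ with $M \leq G$ and let $K$ be an arbitrary subgroup of $M$.

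First I would note that $\mathfrak{M}$ is closed under passage to subgroups: intersecting a cyclic--quasicyclic--finite series $1 = G_0 \triangleleft G_1 \triangleleft \dots \triangleleft G_n = M$ with $K$ gives such a series for $K$, because each factor $(K \cap G_i)/(K \cap G_{i-1})$ embeds in $G_i/G_{i-1}$, and the subgroups of cyclic, quasicyclic, or finite groups are themselves cyclic, quasicyclic, or finite. Hence $K$ is an $\mathfrak{M}$-group, and by the observation preceding Lemma~\ref{lem:pk1} it contains a finitely generated subgroup $K_0$ with $h(K_0) = h(K)$.

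Next I would invoke the separability hypothesis. Since $K_0$ is finitely generated, it is separable in $G$: write $K_0 = \bigcap_{\alpha} L_\alpha$ with each $L_\alpha$ of finite index in $G$. Intersecting with $K$ gives $K_0 = \bigcap_{\alpha} (L_\alpha \cap K)$, and each $L_\alpha \cap K$ has finite index in $K$, so $K_0$ is separable in $K$. Applying Lemma~\ref{lem:pk1} to the $\mathfrak{M}$-group $K$ and its separable subgroup $K_0$ of equal Hirsch length, I conclude $[K:K_0] < \infty$, whence $K$ is finitely generated.

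To finish, choose a soluble subgroup $M_0$ of finite index in $M$. By the previous step every subgroup of $M_0$ is finitely generated; in particular the terms of the derived series of $M_0$ are finitely generated, so each abelian quotient is a finitely generated abelian group and thus a finite direct sum of cyclics. Refining the derived series yields a cyclic series for $M_0$, so $M_0$ is polycyclic and $M$ is polycyclic-by-finite. I do not foresee any serious obstacle: the one point worth flagging is that the hypothesis on $G$ must be propagated through the subgroup lattice of $M$, which requires both the closure of $\mathfrak{M}$ under subgroups and the elementary transfer of separability from $G$ to the intermediate group $K$, after which Lemma~\ref{lem:pk1} does all the work.
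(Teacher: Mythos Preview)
Your proof is correct and follows essentially the same approach as the paper's: choose a finitely generated subgroup of full Hirsch length, use the hypothesis to see it is separable, and invoke Lemma~\ref{lem:pk1} to conclude it has finite index, so every $\mathfrak{M}$-subgroup of $G$ has all subgroups finitely generated. You have simply spelled out in more detail the steps the paper leaves implicit (closure of $\mathfrak{M}$ under subgroups, transfer of separability from $G$ to the intermediate subgroup, and the passage from ``every subgroup finitely generated'' to polycyclic-by-finite).
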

\begin{proof}
Let $H$ be an $\mathfrak M$-subgroup of $G$ and let $J$ be a subgroup of $H$ that is finitely generated with $h(J)=h(H)$. By Lemma \ref{lem:pk1}, $J$ has finite index in $H$, and hence $H$ is finitely generated. This shows that all $\mathfrak M$-subgroups of $G$ are finitely generated. Since every subgroup of an $\mathfrak M$-group is also a $\mathfrak M$-group it follows that in this situation the $\mathfrak M$-subgroups satisfy the maximal condition on subgroups. Virtually soluble groups with the maximal condition are all virtually polycyclic and the result follows.
\end{proof}

The next lemma follows from \cite[Theorem 3.1]{DFOB} once one interprets the notion of `solvable FAR group' and is there attributed to D. J. S. Robinson. We include a proof for the reader's convenience that is suited to our nomenclature.

\begin{lem}[D. J. S. Robinson]\label{lem:pk3}
Let $G$ be an $\mathfrak M$-group and let $H$ and $K$ be finitely generated subgroups of Hirsch length equal to $h(G)$. Then $H$ and $K$ are commensurate.
\end{lem}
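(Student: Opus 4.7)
The plan is to reduce to the finitely generated soluble case and then induct on the length of an $\mathfrak M$-series, with the main difficulty occurring when the top factor is infinite cyclic.

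First, I would replace $G$ by $\langle H,K\rangle$; this subgroup is finitely generated (since $H$ and $K$ are) and contains $H$, so has Hirsch length at least $h(H)=h(G)$, hence equal to $h(G)$. Since commensurability of $H$ and $K$ is intrinsic to $\langle H,K\rangle$, we may assume $G$ itself is finitely generated. Next, pass to a soluble normal subgroup $G_0$ of finite index in $G$ (which exists since $G$ is virtually soluble) and replace $H,K$ by $H\cap G_0,K\cap G_0$; these are still finitely generated and of full Hirsch length in $G_0$. So we may assume that $G$ is finitely generated and soluble.

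I then induct on the length $n$ of an $\mathfrak M$-series $1=G_0\triangleleft\cdots\triangleleft G_n=G$. Let $N:=G_{n-1}$. Because $G$ is finitely generated, so is $G/N$; this rules out $G/N$ being quasicyclic, so $G/N$ is either finite or infinite cyclic. A routine Hirsch-length calculation using $h(H)=h(G)$ gives $h(H\cap N)=h(N)$ and $h(HN/N)=h(G/N)$, and likewise for $K$. If $G/N$ is finite, then $N$ has finite index in $G$ and is therefore finitely generated, $H\cap N$ and $K\cap N$ are finitely generated of Hirsch length $h(N)$, and the inductive hypothesis applied in $N$ gives that $H\cap N$ and $K\cap N$ are commensurate, whence so are $H$ and $K$.

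The main obstacle is the case $G/N\cong\mathbf Z$: here $N$ need not be finitely generated, and neither need $H\cap N$ or $K\cap N$, so the inductive hypothesis is not directly available in $N$. After replacing $H,K$ by $H\cap KN$ and $K\cap HN$ we may assume $HN=KN=G$, so both $H$ and $K$ surject onto $G/N\cong\mathbf Z$. To circumvent the failure of finite generation I would invoke the observation noted just before the lemma (every $\mathfrak M$-group contains a finitely generated subgroup of the same Hirsch length) to pick finitely generated $J_H\le H\cap N$ and $J_K\le K\cap N$ of Hirsch length $h(N)$; by the inductive hypothesis applied in $N$, $J_H$ and $J_K$ are commensurate. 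The delicate step is lifting this back to commensurability of $H$ and $K$: choosing lifts $t\in H$ and $s\in K$ of a fixed generator of $G/N$, so $s=nt$ for some $n\in N$, one must find a power $t^m\in H\cap K$, which is equivalent to locating $m\ge 1$ such that the cocycle $n+t\cdot n+\cdots+t^{m-1}\cdot n$ lies in $K\cap N$. I expect this to require a Fermat--Euler style return argument on the action of $\langle t\rangle$ on $N$ modulo a sufficiently large finitely generated subgroup of $K\cap N$; combined with $J_H\cap J_K$ this produces a subgroup of $H\cap K$ of full Hirsch length $h(G)$ and of finite index in both $H$ and $K$, yielding the desired commensurability.
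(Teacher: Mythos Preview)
Your reductions to the finitely generated soluble case are fine, and the case $G/N$ finite goes through. The case $G/N\cong\mathbf Z$, however, has a genuine gap that is not just an unwritten detail.

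Suppose you do find $m$ with $t^m\in K$; you then have $\langle J_H\cap J_K,\,t^m\rangle\le H\cap K$ of Hirsch length $h(G)$. To conclude $[H:H\cap K]<\infty$ you would still need $[H\cap N:\,H\cap K\cap N]<\infty$, and for this it is not enough that $J_H\cap J_K$ has full Hirsch length in $N$: the subgroup $J_H$ may itself have \emph{infinite} index in $H\cap N$ (for instance $H\cong \mathbf Z[1/2]\rtimes\mathbf Z$ with $H\cap N=\mathbf Z[1/2]$, which is not finitely generated), so commensurability of $J_H$ and $J_K$ gives no control over $[H\cap N:H\cap K\cap N]$. Nor can you invoke the lemma inductively for the pair $L\le H$ inside $H$, since your induction parameter is the length of a chosen series for $G$, which need not drop when you pass to $H$. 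The ``return'' step for $t^m\in K$ is also not properly set up: the natural target for a recurrence would be $N/(K\cap N)$, but $K\cap N$ need not be normal in $N$, so this is only a coset space.

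The paper avoids all of this with two changes. First, after replacing $G$ by $\langle H,K\rangle$ it sets $K=G$, reducing the task to $[G:H]<\infty$ with $G$ finitely generated; this removes the lifting problem entirely. Second, it inducts on $h^*(G)$ and works from the \emph{bottom} of the group rather than the top of a series: choose an infinite abelian normal subgroup $A\le G$, apply induction to $G/A$ to get $[G:HA]<\infty$, reduce to $G=HA$, and observe that $H\cap A\trianglelefteq G$ with $A/(H\cap A)$ torsion abelian of finite rank; finite generation of $G$ then forces $A/(H\cap A)$ to be finite. Taking $A$ abelian and normal is precisely what converts the endgame into a direct finiteness argument instead of a cocycle computation in a possibly non-finitely-generated $N$.
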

\begin{proof}
The group $\langle H,K\rangle$ has the same Hirsch length as $G$, so replacing $K$ and $G$ by this group, we may assume that $H\subset K=G$ and that $G$ is finitely generated.

Let $h^*$ denote the number of infinite factors in a cyclic--quasicyclic--finite series. We use induction on $h^*(G)$ to prove that $H$ has finite index in $G$. If $h^*(G)=0$, then $G$ is finite, and there is nothing to prove. Let us then assume that $G$ is infinite. Every infinite $\mathfrak M$-group has an infinite abelian normal subgroup. Let $A$ be an infinite abelian normal subgroup of $G$.
Then 
\[
h(HA/A)=h(HA)-h(A)\ge h(H)-h(A)=h(G)-h(A)=h(G/A),
\]
and $h^*(G/A)<h^*(G)$. Therefore $HA$ has finite index in $G$ by induction. We may replace $G$ by $HA$ and so assume that $G=HA$. The intersection $H\cap A$ is normal in $G$, and we have 
\[
h(H/H\cap A)=h(HA/A)=h(G/A)=h(G)-h(A)=h(H)-h(A)
\]
from which it follows that $h(H\cap A)=h(A)$. We deduce that $A/H\cap A$ is torsion. However, $G$ is finitely generated, $A/H\cap A$ is abelian and torsion of finite rank, and $G/H\cap A$ is the semidirect product of $A/H\cap A$ by $G/A$. In a finitely generated semi-direct product, the normal subgroup is always finitely generated as a normal subgroup. Therefore, it  follows that $A/H\cap A$ is finite, so $H$ has finite index in $HA=G$.
\end{proof}

\begin{lem}\label{lem:pk4}
	Let $G$ be a finitely generated virtually soluble group. Assume that for all subnormal subgroups $H$ of $G$ and all finitely generated $J \leq \mathrm{Comm}_G(H)$, there exists a subgroup $K \leq H$ that is normalized by $J$ and has finite index in $H$. Then $G$ is virtually polycyclic. 
\end{lem}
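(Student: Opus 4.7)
The plan is to prove the contrapositive: assuming $G$ is not polycyclic-by-finite, we construct a subnormal subgroup $H \le G$ and a finitely generated $J \le \mathrm{Comm}_G(H)$ admitting no finite-index $J$-invariant subgroup of $H$. After passing to a characteristic finite-index soluble subgroup of $G$ (which is finitely generated and in which subnormal subgroups remain subnormal in $G$, while commensurators only enlarge), we reduce to the case that $G$ is finitely generated and soluble.

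The uniform obstruction will come from a section of $G$ containing a subnormal abelian subgroup $\bar H$ together with a cyclic subgroup $\bar J = \langle \bar t\rangle$ for which $\bar t \bar H \bar t^{-1}$ is a \emph{proper} finite-index subgroup of $\bar H$. Such a configuration forces $\bar J \le \mathrm{Comm}(\bar H)$, but also rules out finite-index $\bar J$-invariant subgroups: any such $\bar K \le \bar H$ satisfies $\bar K = \bar t^n \bar K \bar t^{-n} \subseteq \bar t^n \bar H \bar t^{-n}$ for all $n \ge 0$, and in each configuration below the intersection $\bigcap_{n \ge 0} \bar t^n \bar H \bar t^{-n}$ is trivial. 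This obstruction lifts to $G$: writing $\pi \colon A \twoheadrightarrow A/B$ for the section and choosing a preimage $t \in A$ of $\bar t$, we set $H := \pi^{-1}(\bar H)$ and $J := \langle t \rangle$. Subnormality of $\bar H$ in $A/B$ lifts (via $\pi^{-1}$) to subnormality of $H$ in $A$, the identity $[H : H \cap tHt^{-1}] = [\bar H : \bar H \cap \bar t \bar H \bar t^{-1}]$ gives $J \le \mathrm{Comm}_G(H)$, and any finite-index $J$-invariant $K \le H$ projects under $\pi$ to a finite-index $\bar J$-invariant subgroup of $\bar H$, a contradiction.

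Two cases produce such sections. If $G \notin \mathfrak{M}$, Proposition~\ref{prop:phk} provides a section $A/B \cong C_p \wr \mathbf{Z}$; inside it we take $\bar H = \bigoplus_{n \ge 0} C_p$ (normal in the base, hence subnormal of depth two) and $\bar t$ a generator of the infinite cyclic top, so that $\bar t \bar H \bar t^{-1} = \bigoplus_{n \ge 1} C_p$ has index $p$ in $\bar H$ and $\bigcap_{n \ge 0} \bigoplus_{m \ge n} C_p = \{1\}$. If $G \in \mathfrak{M}$ but is not polycyclic-by-finite, then a cyclic--finite--quasicyclic series of $G$ must contain a quasicyclic factor $C_{p^\infty}$, and the structure theory of soluble minimax groups yields a section containing a copy of $\mathbf{Z}[1/p] \rtimes \mathbf{Z}$ in which $1 \in \mathbf{Z}$ acts by multiplication by $p$; therein $\bar H = \mathbf{Z} \le \mathbf{Z}[1/p]$ is subnormal of depth two, $\bar t \bar H \bar t^{-1} = p\mathbf{Z}$ has index $p$, and $\bigcap_{n \ge 0} p^n \mathbf{Z} = 0$. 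The principal technical obstacle throughout is arranging that the section $A/B$ can be taken with $A$ itself subnormal in $G$, so that the lifted $H$ is subnormal in $G$ and not merely in $A$; this is achieved by locating the section inside a suitable derived subgroup of $G$, where every subgroup containing it is subnormal.
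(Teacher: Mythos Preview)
Your treatment of the case $G \notin \mathfrak M$ is essentially the paper's own argument: the ``half the lamps'' subgroup inside a lamplighter section $C_p \wr \mathbf Z$, with subnormality secured by sandwiching the base between consecutive terms $G_0^{(m+1)} \le K \le B \le G_0^{(m)}$ of the derived series of a normal soluble subgroup of finite index. (Your phrasing of the subnormality mechanism is slightly off: it is not that $A$ is subnormal in $G$, but rather that the chosen $H$ contains $G_0^{(m+1)}$ and sits inside $G_0^{(m)}$, hence is normal in $G_0^{(m)}$ because the quotient is abelian. Proposition~\ref{prop:phk} is what guarantees this placement.)

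The genuine gap is in the case $G \in \mathfrak M$. You assert that ``the structure theory of soluble minimax groups yields a section containing a copy of $\mathbf Z[1/p] \rtimes \mathbf Z$'' with the $\mathbf Z[1/p]$ factor positioned so that your $\bar H = \mathbf Z$ lifts to a subnormal subgroup of $G$. Neither claim is justified. The existence of such a section in an arbitrary finitely generated non-polycyclic $\mathfrak M$-group is not a standard structural fact one can simply cite, and the additional requirement that it be located between derived-series terms (so that subnormality lifts) is a further non-trivial constraint; you have effectively assumed the conclusion of a lemma that is at least as hard as the one you are proving. The paper avoids this entirely by \emph{not} arguing by contradiction in the $\mathfrak M$ case: instead it takes a finitely generated subgroup $H$ of the Fitting subgroup $N$ with $h(H)=h(N)$, observes that $H$ is subnormal (nilpotency of $N$) and that $\mathrm{Comm}_G(H)=G$ (Lemma~\ref{lem:pk3}), applies the hypothesis with $J=G$ to obtain a normal $K \le H$ of finite index, and then uses \v{C}ernikov's theorem together with finite presentability of $G/B$ to force $N/K$ finite, hence $N$ polycyclic. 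This direct route uses only the facts about $\mathfrak M$-groups already assembled in the paper and sidesteps the unproved section-finding step.
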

\begin{proof} Suppose first that $G$ is an $\mathfrak M$-group. Let $H$ be a finitely generated subgroup of the Fitting subgroup $N$ of $G$ such that $h(H)=h(N)$. The subgroup $H$ is subnormal, since $N$ is nilpotent, and $\mathrm{Comm}_G(H)=G$, by Lemma~\ref{lem:pk3}. Let $J$ denote the join $H[N,N]$ of $H$ with the commutator subgroup of $N$. Then $J$ also satisfies $\mathrm{Comm}_G(J)=G$ but is now subnormal of defect at most $2$.

By hypothesis, there is a normal subgroup $K$ of $G$ which has finite index in $J$. The quotient $N/K$ is then an $\mathfrak M$-group of Hirsch length $h(N/K)=h(N)-h(K)=0$ and so is a \v{C}ernikov group. Therefore by \v{C}ernikov's theorem \cite[1.4.1]{LennoxRobinson} there is a characteristic abelian subgroup $B/K$ of finite index in $N/K$.  The group $G$ is finitely generated, $B$ is normal in $G$, and $G/B$ is finitely presented. We thus deduce that $B/K$ is finitely generated as a normal subgroup of $G/K$.  It follows that $B/K$ and we deduce that $N/J$ is finite which implies that $N/[N,N]$ is finitely generated.
 Hence $N$ is polycyclic, and $G$ is virtually polycyclic as required.

Let us now suppose toward a contradiction that $G$ is not an $\mathfrak M$-group.
By Proposition~\ref{prop:phk}  (the main theorem of \cite{Kropholler84}), it follows that $G$ has subgroups $K\le J$ such that 
\begin{itemize}
\item $K$ is a normal subgroup of $J$, and
\item  $J/K$ isomorphic to the lamplighter group $C_p\wr \mathbf Z$ for some prime $p$.
\end{itemize} 
The section $J/K$ is a wreath product which can be identified with the matrix group
$$
\left\{
\begin{pmatrix}
t^n&f\\
0&1
\end{pmatrix};\ 
n\in \mathbf Z, f\in\mathbf F_p[t,t^{-1}]
\right\}.$$
Under this identification, the base $B/K$ of the wreath product --- \textbf{the group of lamps} --- corresponds to
$$
\left\{
\begin{pmatrix}
1&f\\
0&1
\end{pmatrix};\ f\in\mathbf F_p[t,t^{-1}]
\right\}.$$
It is clear from the arguments used in \cite{Kropholler84} that $J$ and $K$ can be chosen such that $B/K$ is sandwiched between two terms of the derived series of a soluble subgroup of finite index in $G$. In other words, we may assume that there is a soluble subgroup $G_0$ that is normal and of finite index in $G$ and an $m\ge0$ such that $G_0^{(m+1)}\le K\le B\le G_0^{(m)}$. In particular, both $B$ and $K$ are subnormal in $G$ of defect at most $2$.

Let $H/K$ be the subgroup of the base $B/K$ of consisting of \textbf{half the lamps}, namely the subgroup corresponding to
$$
\left\{
\begin{pmatrix}
1&f\\
0&1
\end{pmatrix};\ f\in\mathbf F_p[t]
\right\}.$$
Note that $H$ is also subnormal of defect at most $2$.
Clearly $\mathrm{Comm}_G(H)$ contains $J$ while the intersection of the conjugates of $H$ is contained in $K$. Let $J_0$ be a finitely generated subgroup of $J$ such that $J_0K=J$. 
Applying the hypothesis to $J_0$ and $H$, we deduce that $J_0$ normalizes some subgroup of finite index in $H_0$, and therefore $J$ normalizes some subgroup of finite index in $H$. This is a contradiction and so excludes the possibility of large wreath product sections in $G$.
\end{proof}

We can now combine the results of this section to prove the main application.

\begin{proof}[Proof of Corollary~\ref{cor:polyTFAE}]
The implication (i) $\Rightarrow$ (ii) is a result of Mal$'$cev~\cite{Malcev}; moreover, every subgroup of a virtually polycyclic group is finitely generated.  Thus we obtain (i) $\Rightarrow$ (iv) as a special case of Corollary~\ref{cor:Commens=>Insep}, by considering $H$ as a subgroup of $\mathrm{Comm}_G(H)$.  Clearly (ii) implies (iii) and (iv) implies (v).  The implication (iii) $\Rightarrow$ (v) is valid in any group by Corollary~\ref{cor:ERF}.  Thus (i) implies all the other assertions, and (v) is implied by each of the other assertions.  Lemma~\ref{lem:pk4} ensures that (v) implies (i), and hence that all five assertions are equivalent.
\end{proof}

\section{Applications to residually finite groups}

Given $H \le G$, we say that $H$ is \textbf{relatively residually finite} in $G$ if the subgroups of $\mc{N}(G,H)$ have trivial intersection. In other words, every non-trivial element of $H$ is separated from the identity by a quotient of $G$ in which $H$ has finite image.  If $G$ itself is residually finite, then every subgroup is relatively residually finite, so the results of this section will apply to commensurated subgroups of residually finite groups.

\begin{lem}\label{lem:FiniteResidual:relative}
Let $G$ be a group and $H \leq G$ be a subgroup. Then 
$$[C_G(H),  \widetilde{H}] \subseteq \bigcap_{N \in \mc{N}(G,H)} N 
\hspace{1cm} \text{and} \hspace{1cm}
[C_G(H),  \overline{H}] \subseteq \bigcap_{N \in \mc{N}(G,G)} N.$$ 
In particular, if $H$ is  a relatively residually finite subgroup, then $C_G(H) = C_G(\widetilde{H})$, and if  $G$ is residually finite, then   $C_G(H) = C_G(\overline{H})$.
\end{lem}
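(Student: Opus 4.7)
The plan is to combine the description of both closures in Lemma~\ref{lem:N(G,H)} with a short commutator computation. Fix $N \in \mc{N}(G,H)$ and take $c \in C_G(H)$ together with $x \in \widetilde H$. By Lemma~\ref{lem:N(G,H)} we have $\widetilde H \subseteq NH$, so we may write $x = nh$ with $n \in N$ and $h \in H$. Using the commutator identity
\[
[c, nh] \;=\; [c, h]\,[c, n]^h
\]
and the fact that $[c,h]=1$ since $c$ centralizes $H$, we obtain $[c, x] = [c, n]^h$. This element lies in $N$ because $N \trianglelefteq G$ and $n \in N$. As $N \in \mc{N}(G,H)$ was arbitrary, this proves the first inclusion.

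The proof of the profinite statement is word-for-word the same, with $\mc{N}(G,H)$ replaced by $\mc{N}(G,G)$ and the second formula of Lemma~\ref{lem:N(G,H)} used in place of the first; note that $N \in \mc{N}(G,G)$ is still normal in $G$, which is all the argument uses.

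For the ``in particular'' claims, suppose $H$ is relatively residually finite in $G$. By definition $\bigcap_{N \in \mc{N}(G,H)} N = \{1\}$, so the first inclusion collapses to $[C_G(H), \widetilde H] = 1$, i.e.\ $C_G(H) \subseteq C_G(\widetilde H)$; the reverse inclusion is automatic from $H \le \widetilde H$. The residually finite case is identical, using the second inclusion and the fact that $\bigcap_{N \in \mc{N}(G,G)} N = \{1\}$.

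There is no genuine obstacle here: once one writes elements of $\widetilde H$ (respectively $\overline H$) as $nh$ via Lemma~\ref{lem:N(G,H)}, the commutator identity cleanly separates $[c, nh]$ into a factor killed by $c \in C_G(H)$ and a factor that lies in $N$ by normality. The only minor bookkeeping is fixing a commutator convention compatible with $[c,ab] = [c,b]\,[c,a]^b$, so that the $H$-part of $x$ appears inside $[c, h]$ and gets eliminated.
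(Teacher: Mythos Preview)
Your proof is correct and follows essentially the same approach as the paper: both use Lemma~\ref{lem:N(G,H)} to write an arbitrary element of $\widetilde H$ (resp.\ $\overline H$) in $NH$ and then observe that its commutator with any $c \in C_G(H)$ lands in $N$. The paper compresses this into one line (``$y \in HN$, and hence $[x,y] \in N$''), whereas you spell out the commutator identity $[c,nh] = [c,h]\,[c,n]^h$ explicitly; the underlying argument is identical.
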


\begin{proof}
Let $x \in C_G(H)$ and $y \in \widetilde{H}$.  Then for all $N \in \mc{N}(G,H)$ we have $y \in HN$, and hence $[x,y] \in N$.  The required conclusion follows, observing in addition that the inclusion $C_G(H) \geq  C_G(\widetilde{H})$ is obvious.  

The proof of the corresponding fact about the profinite closure is the same.
\end{proof}

The \textbf{FC-centralizer} of a subgroup $H$ of a group $G$, denoted by $\FC_G(H)$, is the collection of those elements $g \in G$ which centralize a finite index subgroup of $H$. The FC-centralizer $\FC_G(H)$ is a normal subgroup of the commensurator $\mathrm{Comm}_G(H)$. 
Notice moreover that the FC-centralizer $\FC_H(H)$ coincides with the \textbf{FC-center} of $H$, i.e. the set of elements of $H$ whose $H$-conjugacy class is finite. A group $G$ is called an $\textbf{FC-group}$ if $G = \FC_G(G)$ or, equivalently, if all elements of $G$ have a finite conjugacy class. We underline the difference between an \textbf{FC-subgroup} of $G$, which  is a subgroup $H$ such that $\FC_H(H) = H$, and an \textbf{FC-central subgroup} of $G$, which is a subgroup of $\FC_G(G)$. 

\begin{cor}\label{cor:NbC:tech}
Let $G$ be a group, let $N$ be a normal subgroup and $H$ be a commensurated  subgroup of $G$ such that $N \cap H = \{1\}$. Assume that every normal FC-subgroup of $G$ is finite. If $G$ is generated by finitely many cosets of $H$ and if $H$ is relatively residually finite in $G$, then $H$ has a finite index subgroup that commutes with $N$. 
\end{cor}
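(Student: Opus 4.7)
My plan is to apply the Main Theorem to extract from $\widetilde{H}$ a normal subgroup $M$ of $G$ of finite index, then combine the hypothesis on normal FC-subgroups with the relative residual finiteness of $H$ to force $M \cap N$ to be finite, and finally deduce the desired conclusion.

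First, applying the Main Theorem to $H \leq G$ produces $M := \bigcap_{g \in G} g\widetilde{H}g^{-1}$, which is normal in $G$ with $[\widetilde{H}:M] < \infty$, $H_1 := M \cap H$ of finite index in $H$, and $M = \widetilde{H_1}$. The subgroup $M$ is weakly separable (being a residual closure) and commensurated in $G$ by Lemma~\ref{lem:closure_commensurability}, so $\mathrm{Comm}_G(M) = G$; and the same lemma gives $\mc{N}(G, H_1) = \mc{N}(G, H)$, so $H_1$ is relatively residually finite. Set $L := M \cap N$, a normal subgroup of $G$ satisfying $L \cap H = \{1\}$.

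The central step is to prove $L \leq \FC_G(G)$; since $\FC_G(G)$ is itself a normal FC-subgroup of $G$, it is finite by hypothesis, forcing $L$ to be finite. Given $x \in L$, I would apply Corollary~\ref{cor:ERF} with $M$ (weakly separable) in place of $H$ and $J := \langle x \rangle \leq N \leq \mathrm{Comm}_G(M) = G$, obtaining a finite-index subgroup $K \leq M$ normal in $\langle x \rangle M$. Then $[x, K] \leq K$ because $x$ normalises $K$, and $[x, K] \leq N$ because $x \in N$ and $N$ is normal in $G$, so $[x, K] \leq K \cap N \leq L$. Combined with the decomposition $G = \langle X \rangle H$ for a finite symmetric $X$ from the proof of the Main Theorem, and with the finite indices $[M : K]$ and $[H : H_1]$, this controls the $G$-conjugacy class of $x$ and places it inside $\FC_G(G)$.

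Once $L$, and hence $F := [M, N] \leq L$, is finite, set $H_2 := H_1 \cap C_G(F)$; this has finite index in $H$ because $G/C_G(F)$ embeds in the finite group $\Aut(F)$. For $h \in H_2$ and $n \in N$, the identity $hnh^{-1} = [h, n] n$ with $[h, n] \in F$ commuting with $h$ yields $h^k n h^{-k} = [h, n]^k n$, so $h^{|F|}$ centralises $N$. To upgrade bounded exponent on the image of $H_2$ in $\Aut(N)$ to triviality, I would invoke Lemma~\ref{lem:FiniteResidual:relative} in the form $C_G(H_1) = C_G(M)$ and apply the FC-hypothesis once more, this time to a normal FC-subgroup of $G/C_G(N)$ produced by the commutator action, concluding that a finite-index subgroup of $H_2$ is contained in $C_G(N)$.

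The main obstacle will be the claim $L \leq \FC_G(G)$: Corollary~\ref{cor:ERF} yields $K \leq M$ normalised by $\langle x \rangle M$, but to extract finiteness of the full $G$-conjugacy class of $x$ one must carefully combine the finite generation of $G$ over $H$ with the finite indices from the Main Theorem and with the relative residual finiteness of $H_1$. The closing step---upgrading $[M, N]$ finite to a finite-index subgroup of $H$ in $C_G(N)$---is comparatively routine given the FC-hypothesis, but still requires care because bounded exponent alone does not force finiteness in $\Aut(N)$.
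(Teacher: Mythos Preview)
Your central step---showing $L := M \cap N \leq \FC_G(G)$---does not go through as written, and the gap is not merely technical. Since $x \in L \leq M$, the group $\langle x \rangle M$ equals $M$, so Corollary~\ref{cor:ERF} gives you nothing beyond a finite-index $K \trianglelefteq M$; you may as well take $K = M$. The resulting inclusion $[x,K] \leq L$ then just restates that $L$ is normal in $M$, and it bounds the $M$-conjugacy class of $x$ only by $|L|$, which is exactly what you are trying to prove is finite. The vague appeal to $G = \langle X\rangle H$ and the various finite indices does not repair this: nothing in your setup bounds the size of $L$, so the argument is circular. (The claim $L \leq \FC_G(G)$ is in fact true, but only \emph{after} one knows $L$ is finite, since then the $G$-conjugacy class of any $x \in L$ lies in the finite normal set $L$.)

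The paper's route avoids $\FC_G(G)$ entirely and is both simpler and more direct. For any $x \in N$, commensuration gives a finite-index $H_1 \leq H$ with $xH_1x^{-1} \leq H$, whence $[x,H_1] \leq N \cap H = \{1\}$; so $x \in C_G(H_1)$. Relative residual finiteness and Lemma~\ref{lem:FiniteResidual:relative} then give $x \in C_G(\widetilde{H_1})$, and since $\widetilde{H_1}$ has finite index in $\widetilde{H}$ one obtains $N \leq \FC_G(M)$. Hence $N \cap M \leq \FC_M(M)$, which is a normal FC-subgroup of $G$ (characteristic in the normal subgroup $M$), so finite by hypothesis. The closing step is also cleaner than your bounded-exponent manoeuvre: since $M$ is relatively residually finite (via $\mathcal N(G,H) = \mathcal N(G,\widetilde H)$), one finds $Q \in \mathcal N(G,M)$ with $Q \leq M$ and $N \cap Q = \{1\}$; as $N$ and $Q$ are both normal with trivial intersection they commute, and $H \cap Q$ is the desired finite-index subgroup of $H$.
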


\begin{proof}
For $x \in  N$, there is a finite index subgroup $H_1 \leq H$ such that $x H_1 x^{-1} \leq H$. The commutator $[x, H_1]$ is contained in the intersection $H \cap  N  =\{1\}$, so $x \in C_G(H_1)$. We deduce from Lemma~\ref{lem:FiniteResidual:relative} that $x \in C_G(\widetilde{H_1})$ since $H$ is relatively residually finite. Moreover, by Lemma~\ref{lem:closure_commensurability}, the index of $\widetilde{H_1}$ in $\widetilde H$ is finite. This  shows that $N \leq \FC_G(\widetilde H)$. Let $M$ be the   normal subgroup of $G$ obtained by applying the Main Theorem to $H$, so that $M$ is a finite index subgroup of $\widetilde H$. Thus  we have $N \leq \FC_G(M)$. In particular $N \cap M$ is contained in $\FC_M(M)$, which is a normal FC-subgroup of $G$. By hypothesis, it is finite, so that $N \cap M$ is finite. By Lemma~\ref{lem:closure_normal}, we have $\mathcal N(G,H) = \mathcal N(G,\widetilde H)$. Since $H$ is relatively residually finite, it follows that $\widetilde H$, and hence also $M$, are relatively residually finite. Therefore, since $N \cap M$ is finite,  there exists $Q \in \mathcal N(G,M)$ such that $N \cap Q = \{1\}$. Since $N$ and $Q$ are both normal, they commute. Thus $H \cap Q$ has finite index in $H$ and commutes with $N$.  
\end{proof}

\begin{cor}\label{cor:NbC}
Let $G$ be a finitely generated residually finite group all of whose  amenable normal subgroups are finite. For any normal subgroup $N$ and any commensurated subgroup $H$, if $N \cap H = \{1\}$, then some subgroup of finite index in $H$ commutes with $N$. 
\end{cor}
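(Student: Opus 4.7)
The plan is to derive Corollary~\ref{cor:NbC} as a direct consequence of Corollary~\ref{cor:NbC:tech}, by verifying each of its three hypotheses in turn.

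First, I would check the two easy hypotheses. Since $G$ is finitely generated, any finite generating set $g_1, \dots, g_n$ yields $G = \langle g_1 H, \dots, g_n H \rangle$, so $G$ is generated by finitely many cosets of $H$. Next, since $G$ is residually finite, the set $\mc{N}(G,G)$ of finite index normal subgroups has trivial intersection; as $\mc{N}(G,G) \subseteq \mc{N}(G,H)$, it follows at once that $H$ is relatively residually finite in $G$.

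The one remaining hypothesis to verify, and the place requiring a genuine argument, is that every normal FC-subgroup of $G$ is finite. The strategy here is to show that any FC-subgroup is amenable, whereupon the assumption on amenable normal subgroups of $G$ gives finiteness. The key input is B.~H.~Neumann's theorem that every finitely generated FC-group is center-by-finite, and in particular virtually abelian. Thus every FC-group is locally virtually abelian, and since local amenability implies amenability, every FC-group is amenable. Applying this to a normal FC-subgroup of $G$ and invoking the hypothesis gives the required finiteness.

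With all three hypotheses in hand, Corollary~\ref{cor:NbC:tech} applies to $G$, $N$, and $H$, and delivers a finite index subgroup of $H$ that commutes with $N$, completing the proof. The only nontrivial step is the amenability of FC-groups via Neumann's theorem; everything else is immediate from the hypotheses.
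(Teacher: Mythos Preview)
Your proposal is correct and follows essentially the same approach as the paper: both reduce directly to Corollary~\ref{cor:NbC:tech} by checking that every normal FC-subgroup of $G$ is amenable (hence finite by hypothesis), with the other two hypotheses being immediate from finite generation and residual finiteness. The only cosmetic difference is which consequence of Neumann's work you invoke---you use that finitely generated FC-groups are center-by-finite (hence FC-groups are locally virtually abelian), while the paper cites that FC-groups are \{locally finite\}-by-abelian; either route yields amenability.
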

\begin{proof}
Every FC-group is \{locally finite\}-by-abelian, see \cite[Theorem~5.1 and Corollary~5.13]{Neumann}. In particular FC-groups are amenable. Thus the required conclusion  follows directly from Corollary~\ref{cor:NbC:tech}. 
\end{proof}

In \cite{BFS}, U. Bader, A. Furman and R. Sauer have undertaken a systematic study of \textbf{lattice envelopes} of an abstract group $\Gamma$, that is, a description of the structure of all locally compact groups $G$ that   contain an isomorphic copy of 
$\Gamma$ as a lattice. Their theory requires the abstract group $\Gamma$ to satisfy three conditions. One of those conditions is that for any normal subgroup N and any commensurated subgroup H in $\Gamma$, if $N\cap H = \{1\}$, then H has a finite index subgroup that commutes with $N$. Thus Corollary~\ref{cor:NbC} shows that this one of the Bader--Furman--Sauer conditions is automatically satisfied by every finitely generated residually finite group whose amenable radical is finite.  

\begin{cor}\label{cor:RF:relative}
Let $G$ be a finitely generated group in which every infinite normal subgroup has trivial centralizer. Then every infinite commensurated relatively residually finite subgroup has trivial FC-centralizer. 
\end{cor}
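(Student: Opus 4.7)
My plan is to take an arbitrary $g$ in $\FC_G(H)$ and show $g=1$ by exhibiting an infinite normal subgroup of $G$ that $g$ centralizes; the hypothesis on $G$ will then force the desired conclusion.

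First, I would fix $g \in \FC_G(H)$ and choose a finite-index subgroup $H_1 \leq H$ that is centralized by $g$. Before applying the Main Theorem to $H_1$, I need to verify that $H_1$ inherits the relevant hypotheses from $H$. Since $H$ is commensurated in $G$ and $[H:H_1]<\infty$, a short index-multiplicativity argument shows that $H_1$ is again commensurated in $G$. By Lemma~\ref{lem:closure_commensurability} we have $\mathcal{N}(G,H_1)=\mathcal{N}(G,H)$, so $H_1$ is still relatively residually finite. Finally, because $G$ is finitely generated, $G$ is generated by finitely many cosets of $H_1$.

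Next, Lemma~\ref{lem:FiniteResidual:relative} applied to $H_1$ upgrades $g\in C_G(H_1)$ to $g\in C_G(\widetilde{H_1})$. Now apply the Main Theorem to $H_1$: the subgroup
\[
M\;:=\;\bigcap_{x\in G}x\widetilde{H_1}x^{-1}
\]
is a normal subgroup of $G$ of finite index in $\widetilde{H_1}$. Since $H$ is infinite, so is $H_1$, and hence so is $\widetilde{H_1}$ and its finite-index subgroup $M$. Because $M\leq \widetilde{H_1}$, the element $g$ centralizes $M$. The standing hypothesis on $G$ gives $C_G(M)=\{1\}$, forcing $g=1$. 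As $g\in\FC_G(H)$ was arbitrary, this proves $\FC_G(H)=\{1\}$.

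I do not anticipate any serious obstacle; the only care required is in verifying that the finite-index subgroup $H_1$ produced from the definition of $\FC_G(H)$ still meets the hypotheses of the Main Theorem and of Lemma~\ref{lem:FiniteResidual:relative}, which is routine given the lemmas already established.
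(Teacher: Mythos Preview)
Your proposal is correct and follows essentially the same strategy as the paper: pick $g\in\FC_G(H)$, pass to a finite-index subgroup it centralizes, invoke the Main Theorem to produce an infinite normal subgroup centralized by $g$, and conclude via the hypothesis. The only cosmetic difference is the point at which Lemma~\ref{lem:FiniteResidual:relative} is applied: you use it to pass from $C_G(H_1)$ to $C_G(\widetilde{H_1})$ before taking the normal core, whereas the paper first takes the normal subgroup $N$ from the Main Theorem and then applies the lemma to $N\cap H_0$ (using that $N=\widetilde{N\cap H_0}$ is relatively residually finite) to obtain $C_G(N)=C_G(N\cap H_0)$.
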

\begin{proof}
Let $H \leq G$ be an infinite commensurated subgroup and let $x \in \FC_G(H)$. There is a finite index subgroup $H_0 \leq H$ such that $x \in C_G(H_0)$. Let then $N$ be the normal subgroup of $G$ obtained by applying the Main Theorem to $H_0$ and note that $N$ is relatively residually finite as a consequence of  Lemma~\ref{lem:closure_normal}.  Since $N = \widetilde{N \cap H_0}$, we deduce from Lemma~\ref{lem:FiniteResidual:relative} that $C_G(N) = C_G(N \cap H_0)$. The group $N$ is infinite, so $C_G(N)$ is trivial by hypothesis. Hence, $x \in C_G(H_0)\leq C_G(N \cap H_0)$ is trivial.  
\end{proof}

The hypothesis that $H$ be relatively residually finite cannot be removed in Corollary~\ref{cor:RF:relative}. As we shall see in the next section, this is illustrated by the Baumslag--Solitar groups. 

\begin{lem}\label{lem:finite_normal}
Let $G$ be an infinite group in which every infinite normal subgroup has trivial centralizer.  Then every non-trivial normal subgroup is infinite.
\end{lem}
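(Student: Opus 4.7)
The plan is a short argument by contradiction, using the elementary fact that the centralizer of a finite normal subgroup has finite index.

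Suppose $N$ is a non-trivial normal subgroup of $G$ and, aiming for a contradiction, assume $N$ is finite. The conjugation action of $G$ on $N$ yields a homomorphism $G \to \Aut(N)$ whose kernel is $C_G(N)$; since $N$ is finite, $\Aut(N)$ is finite and hence $C_G(N)$ has finite index in $G$. Because $G$ is infinite, $C_G(N)$ is infinite. Moreover $C_G(N)$ is normal in $G$: for $g \in G$ and $c \in C_G(N)$, the element $gcg^{-1}$ centralizes $gNg^{-1}=N$.

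Applying the standing hypothesis to the infinite normal subgroup $M := C_G(N)$, we obtain $C_G(M)=\{1\}$. However, every element of $N$ centralizes $C_G(N)$ by definition, so $N \subseteq C_G(M) = \{1\}$, contradicting the non-triviality of $N$. Hence $N$ is infinite, as required.

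There is no real obstacle here: the only step worth double-checking is the normality of $C_G(N)$, which is immediate from the normality of $N$, and the observation that $N \leq C_G(C_G(N))$, which is a tautology. The argument uses nothing beyond the hypothesis and the finiteness of $\Aut(N)$ for a finite group $N$.
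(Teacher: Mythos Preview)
Your proof is correct and follows essentially the same line as the paper's: both show that $C_G(N)$ is an infinite normal subgroup (finite index in $G$), apply the hypothesis to conclude $C_G(C_G(N))=\{1\}$, and observe that $N\subseteq C_G(C_G(N))$. The paper's version is just more terse, omitting the explicit verification of normality and of the index being finite.
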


\begin{proof}
Let $N$ be a finite normal subgroup of $G$.  Its centralizer $C_G(N)$ is a normal subgroup of finite index, hence it is infinite. On the other hand, $C_G(C_G(N))$, which contains $N$, is trivial by hypothesis.
\end{proof}

\begin{cor}\label{cor:Intersections:relative}
Let $G$ be a finitely generated group in which every infinite normal subgroup has trivial centralizer.  Then every infinite commensurated relatively residually finite subgroup has an infinite intersection with every non-trivial normal subgroup.
\end{cor}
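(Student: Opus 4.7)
The plan is to argue by contradiction. Suppose $H$ is an infinite commensurated relatively residually finite subgroup of $G$ and $K$ is a non-trivial normal subgroup of $G$ with $H \cap K$ finite. Since $G$ satisfies the standing hypothesis, Lemma~\ref{lem:finite_normal} gives that $K$ is infinite, so $K$ is in particular non-trivial and has trivial centralizer.

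The first key step is to replace $H$ by a finite index subgroup $H_1$ that meets $K$ only in the identity. For each non-identity $x$ in the finite set $H \cap K$, the relative residual finiteness of $H$ yields some $N_x \in \mc{N}(G,H)$ with $x \notin N_x$; since $\mc{N}(G,H)$ is closed under finite intersections, $N := \bigcap_x N_x$ still belongs to $\mc{N}(G,H)$ and satisfies $N \cap (H \cap K) = \{1\}$. Set $H_1 := H \cap N$. Then $H_1$ has finite index in $H$, is commensurated and infinite, inherits relative residual finiteness through the identity $\mc{N}(G,H_1) = \mc{N}(G,H)$ provided by Lemma~\ref{lem:closure_commensurability}, and by construction $H_1 \cap K = \{1\}$.

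The second step is the main geometric input: I will show $K \subseteq \FC_G(H_1)$. Fix $k \in K$. Since $H_1$ is commensurated in $G$, the subgroup $H_2 := H_1 \cap kH_1k^{-1}$ has finite index in $H_1$. For any $h \in H_2$, the element $k^{-1}hk$ lies in $H_1$, so
\[
[k,h] \;=\; (k^{-1}h^{-1}k)\,h \;\in\; H_1.
\]
On the other hand, $[k,h] \in K$ because $K \trianglelefteq G$. Hence $[k,h] \in H_1 \cap K = \{1\}$, so $k$ centralizes the finite index subgroup $H_2$ of $H_1$; that is, $k \in \FC_G(H_1)$.

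Finally, Corollary~\ref{cor:RF:relative} applied to the infinite commensurated relatively residually finite subgroup $H_1$ gives $\FC_G(H_1) = \{1\}$, whence $K = \{1\}$, contradicting the non-triviality of $K$. I expect the only subtle point is the construction of $H_1$, where one must be careful to verify that all of the properties of $H$ used in the last step (commensuratedness, infiniteness, and relative residual finiteness) really do pass to $H_1$; each of these, however, is an immediate consequence of Lemma~\ref{lem:closure_commensurability}.
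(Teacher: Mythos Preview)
Your proof is correct and follows the same strategy as the paper's: use relative residual finiteness to arrange a trivial intersection, apply the commutator trick to show the normal subgroup lies in an FC-centralizer, and invoke Corollary~\ref{cor:RF:relative} for the contradiction. The only cosmetic difference is that you pass to a finite-index subgroup $H_1 \leq H$ and apply Corollary~\ref{cor:RF:relative} to $H_1$, whereas the paper instead intersects the normal subgroup with an element $M \in \mc{N}(G,H)$, applies Corollary~\ref{cor:RF:relative} to $H$ directly, and finishes with the observation that disjoint normal subgroups commute.
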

\begin{proof}
We assume that $G$ is infinite.  Let $H \leq G$ be an infinite commensurated subgroup and $N \leq G$ be a non-trivial normal subgroup. By Lemma~\ref{lem:finite_normal}, $N$ is infinite.
	
Assume toward a contradiction that  $H \cap N$ is finite. Since $H$ is infinite and relatively residually finite, there is an infinite $M \in \mc{N}(G,H)$ such that $H \cap N \cap M = \{1\}$. For $x \in  N \cap M$, there is a finite index subgroup $H_1 \leq H$ such that $x H_1 x^{-1} \leq H$. The commutator $[x, H_1]$ is contained in the intersection $H \cap  N \cap M =\{1\}$, so $x \in C_G(H_1)$. We conclude that $ N \cap M  \leq \FC_G(H)$, so $ N \cap M  = \{1\}$ since $\FC_G(H) = \{1\}$ by Corollary~\ref{cor:RF:relative}. In particular, $N \leq C_G(M)=\{1\}$, which is absurd.
\end{proof}

The conclusion of Corollary~\ref{cor:Intersections:relative} cannot be extended to a conclusion that any two infinite commensurated relatively residually finite subgroups have infinite intersection.  For instance, let $\Gamma < G_1 \times G_2$ be an irreducible residually finite lattice in a product of two totally disconnected locally compact groups and choose $U_1,U_2$ to be compact open subgroups of $G_1,G_2$ respectively such that $\Gamma \cap U_1 \times U_2 = \triv$.  We then obtain two infinite commensurated subgroups of $\Gamma$ with trivial intersection, namely $W_1 := \Gamma \cap (U_1 \times G_2)$ and $W_2 := \Gamma \cap (G_1 \times U_2)$.

To conclude this section, we note a property of residually finite dense subgroups of totally disconnected locally compact groups.

\begin{cor}\label{cor:no_discrete_normal}
Let $G$ be a non-discrete totally disconnected locally compact group such that every infinite normal subgroup of $G$ has trivial centralizer.  If $\Gamma$ is a dense subgroup of $G$ that is finitely generated and residually finite, then the only discrete normal subgroup of $G$ contained in $\Gamma$ is the trivial subgroup.
\end{cor}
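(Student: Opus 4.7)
The plan is to exhibit an infinite commensurated relatively residually finite subgroup $H$ of $\Gamma$ whose intersection with any discrete normal subgroup of $G$ is forced to be finite, and then derive a contradiction from Corollary~\ref{cor:Intersections:relative}, which would otherwise demand such intersections to be infinite.

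First, I would invoke van Dantzig's theorem to choose a compact open subgroup $U \le G$ and set $H := \Gamma \cap U$. Since $G$ is non-discrete, $U$ is infinite; density of $\Gamma$ in $G$ makes $H$ dense in $U$, hence $H$ is infinite. Because any two compact open subgroups of $G$ are commensurate, $U$ is commensurated in $G$, and the identity $\gamma H \gamma^{-1} = \Gamma \cap \gamma U \gamma^{-1}$ for $\gamma \in \Gamma$ transfers commensuration from $U$ to $H$ in $\Gamma$. As a subgroup of the residually finite group $\Gamma$, $H$ is relatively residually finite in $\Gamma$.

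Second, I would verify that $\Gamma$ inherits from $G$ the standing hypothesis of Corollary~\ref{cor:Intersections:relative}, namely that every infinite normal subgroup of $\Gamma$ has trivial centralizer. Given such $N \trianglelefteq \Gamma$, its closure $\overline N$ in $G$ is a non-trivial normal subgroup of $G$, which is infinite by Lemma~\ref{lem:finite_normal}; the hypothesis on $G$ yields $C_G(\overline N) = \{1\}$, and therefore $C_\Gamma(N) \subseteq C_G(\overline N) = \{1\}$.

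Finally, suppose for contradiction that $D \le \Gamma$ is a non-trivial discrete normal subgroup of $G$. By Lemma~\ref{lem:finite_normal} applied to $G$, $D$ is infinite, so Corollary~\ref{cor:Intersections:relative} applied to $\Gamma$, $H$ and $D$ yields that $H \cap D$ is infinite. But $D$ is closed in the Hausdorff group $G$, so $H \cap D = D \cap U$ is a discrete closed subset of the compact set $U$, hence finite. This contradiction shows $D = \{1\}$. The main subtlety is the transfer of the trivial-centralizer hypothesis from $G$ to $\Gamma$ via density; once that is in place, the conclusion follows formally from the previously established corollaries.
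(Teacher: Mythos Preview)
Your proof is correct and follows essentially the same approach as the paper: verify that $\Gamma$ inherits the trivial-centralizer hypothesis via density, take $H=\Gamma\cap U$ for a compact open $U$, and apply Corollary~\ref{cor:Intersections:relative} to force $H\cap D$ to be infinite, contradicting discreteness of $D$. Your write-up is in fact more careful than the paper's on two points the paper leaves implicit: that $H$ is infinite (needed to invoke Corollary~\ref{cor:Intersections:relative}), and the passage from ``$H\cap D$ infinite'' to ``$D$ not discrete'' via compactness of $U$.
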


\begin{proof}
Given an infinite normal subgroup $M$ of $\Gamma$, we see that the closure of $M$ is an infinite normal subgroup of $G$ and thus has trivial centralizer.  Since the centralizer is unaffected by taking the closure, it follows that $C_G(M) = \triv$.  W conclude that $\Gamma$ satisfies the hypotheses of Corollary~\ref{cor:Intersections:relative}.

Let $N$ be a non-trivial normal subgroup of $\Gamma$, let $U$ be a compact open subgroup of $G$ and set $H = \Gamma \cap U$.  The subgroup $H$ is commensurated in $\Gamma$, so by Corollary~\ref{cor:Intersections:relative}, the intersection $N \cap H$ is infinite. It now follows that $N$ is not discrete.
\end{proof}

\section{Applications to generalized Baumslag--Solitar groups}

In the setting of Hausdorff topological groups, the collection of elements which satisfy a fixed law is often closed. Alternatively, if a set satisfies a law, then so does its closure. The simplest example of this phenomenon is that centralizers are always closed in a Hausdorff topological group. 

While the residual closure does not necessarily come from a group topology, it does appear to behave well with respect to laws; cf. Corollary~\ref{lem:FiniteResidual:relative}. We here explore the extent to which laws pass to the the residual closure of a subgroup.

\begin{lem}\label{lem:pkvar}
Let $\mathcal V$ be a variety of groups. If $G$ is a group and $H$ is a relatively residually finite $\mathcal V$-subgroup of $G$, then the residual closure $\wt{H}$ of $H$ also belongs to $\mathcal V$.  If $G$ is residually finite, then $\overline{H} \in \mathcal{V}$. 
\end{lem}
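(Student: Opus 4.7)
The plan is to exploit the fact that a variety $\mathcal V$ is, by Birkhoff's HSP theorem, closed under subgroups, homomorphic images, and arbitrary direct products; equivalently, membership in $\mathcal V$ is detected by a fixed family of laws $\{w_i\}_{i \in I}$.

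First I would invoke Lemma~\ref{lem:N(G,H)} to express
\[
\wt{H} \;=\; \bigcap_{N \in \mc{N}(G,H)} NH.
\]
Fix $N \in \mc{N}(G,H)$ and consider the restriction to $\wt{H}$ of the quotient map $\pi_N : G \to G/N$. Since $\wt{H} \subseteq NH$, the image $\pi_N(\wt{H})$ lies inside $\pi_N(NH) = NH/N$, and the second isomorphism theorem gives $NH/N \cong H/(H \cap N)$. The right-hand side is a homomorphic image of $H \in \mathcal{V}$, so it lies in $\mathcal V$; then $\pi_N(\wt{H})$, being a subgroup of this group, also lies in $\mathcal V$.

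Second, I would assemble the maps $\pi_N$ into a single homomorphism
\[
\Phi : \wt{H} \longrightarrow \prod_{N \in \mc{N}(G,H)} G/N, \qquad \Phi(x) = (\pi_N(x))_{N}.
\]
The kernel of $\Phi$ equals $\wt{H} \cap \bigcap_{N \in \mc{N}(G,H)} N$, and the hypothesis that $H$ is relatively residually finite in $G$ is precisely that this latter intersection is trivial. Hence $\Phi$ is injective, so $\wt{H}$ embeds into a direct product of $\mathcal V$-groups. Closure of $\mathcal V$ under products and subgroups then gives $\wt{H} \in \mathcal V$, as required.

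For the second statement, the argument is identical after replacing $\mc{N}(G,H)$ by $\mc{N}(G,G)$: Lemma~\ref{lem:N(G,H)} gives $\overline H = \bigcap_{N \in \mc{N}(G,G)} NH$, each quotient $\pi_N(\overline H)$ embeds in a quotient of $H$ and hence lies in $\mathcal V$, and the assembled map $\Phi$ is injective because residual finiteness of $G$ says $\bigcap_{N \in \mc{N}(G,G)} N = \{1\}$. There is no real obstacle here; the proof is a routine subdirect-product argument, and the only point requiring care is the observation that $\pi_N(\wt{H})$ is a subgroup of a quotient of $H$, rather than of $H$ itself — but this is immediate from $\wt{H} \subseteq NH$ and the isomorphism $NH/N \cong H/(H \cap N)$.
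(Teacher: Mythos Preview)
Your proof is correct and follows essentially the same subdirect-product idea as the paper: both arguments embed $\wt{H}$ injectively into a group assembled from the finite quotients $HN/N \cong H/(H\cap N)$, each of which inherits the laws of $H$. The only cosmetic difference is that the paper packages this as an injection into the inverse limit $K = \varprojlim HN/N$ and appeals to density of $H$ in the profinite group $K$, whereas you use the full direct product and Birkhoff's closure properties directly; these are equivalent formulations of the same argument.
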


\begin{proof}
We see from the hypotheses that there is an injective map from $\wt{H}$ to a profinite group $K$, where $K$ is the inverse limit of the finite groups $HN/N$ for $N \in \mc{N}(G,H)$, such that the image of $H$ in $K$ is dense.  Since $H$ is a $\mathcal{V}$-group, it follows that $K$ is a $\mathcal{V}$-group and hence $\wt{H}$ is a $\mathcal{V}$-group.  If $G$ is residually finite, the argument that $\overline{H} \in \mathcal{V}$ is similar.
\end{proof}

\begin{cor}\label{cor:pk4.1}
Let $\mathcal V$ be a variety of groups and let $G$ be a finitely generated group. Suppose that $H$ is a commensurated relatively residually finite $\mathcal V$-subgroup of $G$.
Then there is a normal $\mathcal V$-subgroup $K$ of $G$ such that $[H:H\cap K]<\infty$. 
\end{cor}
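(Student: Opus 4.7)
The plan is to apply the Main Theorem directly to the pair $(G,H)$ and then use Lemma~\ref{lem:pkvar} to see that the normal subgroup it produces lies in $\mathcal{V}$. First I would verify the hypothesis of the Main Theorem: since $G$ is finitely generated as a group, any finite generating set $\{g_1,\ldots,g_n\}$ gives finitely many cosets $Hg_1,\ldots,Hg_n$ whose union generates $G$. Thus $G$ is generated by finitely many cosets of $H$, and $H$ is commensurated by hypothesis.

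Next, applying the Main Theorem (Theorem~\ref{thm:ProfiniteDensity}) yields a normal subgroup
\[
N = \bigcap_{g \in G} g\widetilde{H}g^{-1}
\]
of $G$ which has finite index in $\widetilde{H}$ and satisfies $[H : H \cap N] < \infty$.

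It remains to check that $N$ is a $\mathcal{V}$-group. By Lemma~\ref{lem:pkvar}, the residual closure $\widetilde{H}$ belongs to $\mathcal{V}$, since $H$ is a relatively residually finite $\mathcal{V}$-subgroup of $G$. As $N \leq \widetilde{H}$ and varieties of groups are closed under taking subgroups, we conclude that $N \in \mathcal{V}$. Setting $K := N$ then finishes the proof.

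There is no real obstacle here; the corollary is essentially a packaging of the Main Theorem together with Lemma~\ref{lem:pkvar}. The only point worth double-checking is the translation between ``finitely generated'' and ``generated by finitely many cosets of $H$'', which is immediate, and the fact that we need $N$ itself (not just $\widetilde{H}$) to lie in $\mathcal{V}$, which follows because $\mathcal{V}$ is subgroup-closed.
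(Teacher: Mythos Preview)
Your proof is correct and matches the paper's approach: the paper states this corollary immediately after Lemma~\ref{lem:pkvar} without proof, intending exactly the combination of the Main Theorem (to produce the normal subgroup $N$ with $[H:H\cap N]<\infty$) and Lemma~\ref{lem:pkvar} (to ensure $\widetilde{H}\in\mathcal V$, hence $N\in\mathcal V$) that you have written out.
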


We next consider the \textbf{Baumslag--Solitar groups} $\mathrm{BS}(m,n)$. The group $\mathrm{BS}(m, n)$ is the one-relator group given by the presentation
\[
 \langle a, t \mid t a^m t^{-1} = a^n\rangle,
 \]
where  $m$ and $n$ are integers with $mn\ne0$. A Baumslag--Solitar group is an HNN-extension of $\mathbb{Z}$ and so acts on the associated Bass--Serre tree.

We can now provide a swift strategy for recovering the known result on residual finiteness of Baumslag--Solitar groups, predicted in the original work of Baumslag and Solitar and subsequently established by Meskin.

\begin{theorem}[Meskin,\ \cite{Meskin}]\label{cor:pk4.2}
The Baumslag--Solitar group $\mathrm{BS}(m,n)$ is residually finite if and only if the set $\{1,|m|,|n|\}$ has at most $2$ elements.
\end{theorem}
\begin{proof}
We focus on one direction; namely, if $\mathrm{BS}(m,n)$ is residually finite, then 
$$| \{1,|m|,|n|\}|\le2.$$ This is the implication where our Main Theorem provides a significant insight. The converse is rather more routine and we are not here adding anything to the argument that can be found in \cite{Meskin}.

We prove the contrapositive. Suppose that $|\{1,|m|,|n|\}|>2$ and set $G:=\mathrm{BS}(m,n)$. The cyclic subgroup $\langle a\rangle$ fixes a vertex of the Bass--Serre tree $T$ and the action is vertex transitive. Since $|m|\ne|n|$, the subgroup $\bigcap_{g\in G}g\langle a\rangle g^{-1}$ is trivial, and thereby, the representation $G\to\mathrm{Aut}(T)$ is faithful. The vertex stabilizers of $G$ also do not fix any of the incident edges, so in particular, the action of $G$ on $T$ does not fix an end. 

In any group acting minimally without a fixed end on a tree with more than two ends, every normal subgroup either acts trivially or acts minimally without a fixed end, by \cite[Lemme 4.4]{Tits2}. Since a subgroup of $\Aut(T)$ acting minimally has trivial centralizer in $\Aut(T)$ (because the displacement function of any element in the centralizer is constant), it follows that  every non-trivial normal subgroup of $G$ has trivial centralizer. On the other hand, $\langle a\rangle$ is an infinite commensurated abelian subgroup of $G$, which is thus contained in its own FC-centralizer.  

We conclude from Corollary~\ref{cor:RF:relative} that $G$ is not residually finite in this case, and indeed that $\langle a\rangle$ is not even relatively residually finite in $G=\mathrm{BS}(m,n)$.
\end{proof}

As a further illustration of these ideas we offer an application to certain fundamental groups of graphs of virtually soluble groups that generalizes some of the aspects of Meskin's result (Corollary \ref{cor:pk4.2}).  In particular, the class we consider includes all \textbf{generalized Baumslag--Solitar groups}, that is, fundamental groups of graphs of cyclic groups.  We shall need two lemmas in preparation.

\begin{lem}\label{lem:pk24.1}
Let $B$ be a group, let $\phi \in \mathrm{End}(B)$ and let $B_{\infty} = \lim_\to(B\xrightarrow{\phi} B \xrightarrow{\phi} \dots)$.
\begin{enumerate}
\item If $B$ is virtually soluble then $B_{\infty}$ is virtually soluble.
\item The Pr\"ufer rank of $B_{\infty}$ is at most the Pr\"ufer rank of $B$.
\item If $B$ belongs to $\mathfrak M$, then $B_{\infty}$ belongs to $\mathfrak M$, with $h(B_{\infty}) \leq h(B)$.
\end{enumerate}
\end{lem}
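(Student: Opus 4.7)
My plan is to analyze $B_\infty$ through the canonical maps $\psi_n\colon B \to B_\infty$ at each stage of the colimit. First, replacing $B$ by $B/K$ for $K := \bigcup_n \ker(\phi^n)$, I may assume $\phi$ is injective: this substitution does not change $B_\infty$ and preserves all three hypotheses with no increase in Hirsch length. Then $B_\infty = \bigcup_n \psi_n(B)$ is an ascending union of isomorphic copies of $B$, and I will lean on two key facts throughout: (a) for any $\phi$-invariant normal subgroup $N \trianglelefteq B$, the sequence $1 \to N_\infty \to B_\infty \to (B/N)_\infty \to 1$ is short exact (injectivity on the left because equality of stage-representatives inside $N$ coincides with equality inside $B$), and (b) derived subgroups commute with direct limits, $(B_\infty)^{(j)} = (B^{(j)})_\infty$, since any $j$-fold commutator in $B_\infty$ can be evaluated at a single stage.

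Part (2) is immediate: a finitely generated subgroup $F \le B_\infty$ has generators realized at a common stage $N$, so $F = \psi_N(F_0)$ for some finitely generated $F_0 \le B$, and $F_0$ is generated by at most $r(B)$ elements.

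For part (1), I would use the characteristic perfect subgroup $B^{(\infty)} := \bigcap_j B^{(j)}$. For $B$ virtually soluble with normal soluble subgroup $S$ of finite index $m$, the intersection $B^{(\infty)} \cap S$ is a perfect subgroup of the soluble group $S$ and hence trivial, so $B^{(\infty)}$ embeds in $B/S$ and is finite of order at most $m$. By fact (b), the derived series of $B_\infty$ stabilizes at $(B_\infty)^{(\infty)} = (B^{(\infty)})_\infty$, which is a direct limit of a finite group under an endomorphism and is therefore itself finite. Calling this finite normal subgroup $F$, the quotient $B_\infty / F$ is soluble, and the centralizer $C_{B_\infty}(F)$ has finite index (bounded by $|\Aut(F)|$) and is soluble, being an extension of the abelian group $Z(F)$ by its image in $B_\infty/F$. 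This gives the desired soluble subgroup of finite index.

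Part (3) will be the main obstacle. Fact (a) applied to the derived series of $B$ yields a finite filtration of $B_\infty$ whose proper factors are the direct limits $(B^{(j)}/B^{(j+1)})_\infty$ of abelian $\mathfrak{M}$-groups, together with the residual finite quotient $F$. Since $\mathfrak{M}$ is closed under extensions and Hirsch length is additive on them, the problem reduces to the sub-claim that for abelian $A \in \mathfrak{M}$, $A_\infty$ is abelian minimax with $h(A_\infty) \le h(A)$. I would split $A$ at its characteristic torsion subgroup: the torsion part is \v{C}ernikov and decomposes into finitely many $p$-primary components, each an extension of a finite-rank divisible $p$-group by a finite group; these features pass through to the direct limits, so the torsion part of $A_\infty$ remains \v{C}ernikov. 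For the torsion-free part, I would use minimax-ness to embed $A \hookrightarrow \mathbf{Z}[\pi^{-1}]^h$ for some finite set of primes $\pi$; then $V := A \otimes \mathbf{Q} \cong \mathbf{Q}^h$ has $V_\infty \cong \mathbf{Q}^{h'}$ (with $h' \le h$) on which $\phi_V$ acts bijectively, and $A_\infty \hookrightarrow V_\infty$ identifies with $\bigcup_n \phi_V^{-n}(A)$. Because the matrix of $\phi_V^{-1}$ has only finitely many primes in its denominators, this union lies in $\mathbf{Z}[(\pi')^{-1}]^{h'}$ for some finite $\pi' \supseteq \pi$, and is therefore torsion-free abelian minimax of Hirsch length at most $h$. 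The principal subtlety will be controlling this finite-prime condition under iterated $\phi_V^{-1}$, which is exactly where the minimax hypothesis is essential.
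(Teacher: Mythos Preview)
Your argument for part~(2) is fine and matches the paper's. Your treatment of the torsion-free abelian case in part~(3) is also essentially the paper's argument. However, there is a genuine error in your approach to parts~(1) and~(3), namely the claim that $B^{(\infty)} \cap S$ is perfect and hence that $B^{(\infty)}$ is finite.

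Even when $B^{(\infty)}$ is perfect, its intersection with a normal subgroup $S$ has no reason to be perfect: normal subgroups of perfect groups are not perfect in general. A concrete counterexample is $B = V \rtimes A_5$, where $V = \Zb[1/5]^4$ carries the standard $4$-dimensional $A_5$-representation (the nontrivial summand of the permutation module on five points). Here $V$ is abelian minimax, so $B \in \mathfrak M$; and since the coinvariants $V_{A_5}$ vanish (the residual $\Zb/5$ is killed by inverting~$5$) and $A_5$ is perfect, one computes $[B,B] = B$. Thus $B$ is an \emph{infinite perfect} $\mathfrak M$-group, so $B^{(\infty)} = B$ and $B^{(\infty)} \cap S = V$ is far from trivial. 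The same example (or $\Qb^4 \rtimes A_5$ if one only wants virtually soluble) breaks your filtration in part~(3): the derived series of $B$ is constant, so there are no abelian factors $B^{(j)}/B^{(j+1)}$ to reduce to, and your ``residual finite quotient $F$'' is all of $B$.

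The paper avoids this by not using the derived series of $B$ at all. For part~(1) it builds an ascending chain $D_m = C\phi(C)\cdots\phi^m(C)$ from a chosen soluble normal $C$ of finite index, proves inductively that each $D_m$ is soluble of bounded derived length, and takes $D = \bigcup_m D_m$; then $\phi(D) \le D$ and $D_\infty$ is soluble of finite index in $B_\infty$. For part~(3) it inducts on derived length in the soluble case and then passes to the virtually soluble case via this same $\phi$-invariant $D$. A simpler repair of your argument, closer in spirit to what you wrote, is to replace $B^{(\infty)}$ by the soluble radical $R(B)$: it is characteristic (hence $\phi$-invariant), soluble of finite index, and $R(B)_\infty$ is a soluble normal subgroup of $B_\infty$ with finite quotient $(B/R(B))_\infty$.
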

\begin{proof}
We have a sequence $(\tau_i)_{i \ge 0}$ of homomorphisms $\tau_i: B \rightarrow B_{\infty}$ arising from the direct system, such that $\tau_i = \tau_{i+1} \circ \phi$ and $B_{\infty} = \bigcup_{i \ge 0}\tau_i(B)$.  More generally, given $C \subset B$ such that $\phi(C) \subset C$, write $C_{\infty}$ for the ascending union $\bigcup_{i \ge 0}\tau_i(C)$.

\begin{enumerate}
\item
Let $C$ be a soluble normal subgroup of finite index in $B$ and let $k$ be the maximal derived length of a soluble subgroup of $B$.  We now show by induction that for each $m\ge0$, 
$$D_m:=C\phi(C)\phi^2(C)\dots \phi^m(C)$$ is a soluble subgroup of $B$ of derived length at most $k$.
This is true when $m=0$ by the choice of $C$.
Suppose now that $m>0$ and, inductively, that $D_{m-1}$ is a soluble subgroup of $B$. Then $\phi(D_{m-1})\le \phi(B)\le B$, so $D_m=C\phi(D_{m-1}) \le B$. Moreover, $C$ is normal and soluble, $D_{m-1}$ is soluble by induction, and so $D_m$ is a soluble subgroup of length at most $k$.  We thus have an ascending chain of soluble subgroups
$C=D_0\le D_1\le D_2\le\dots$ each of length at most $k$ and having finite index in $B$. Let $D$ denote the union $\bigcup_jD_j$; of course, $D=D_m$ for all sufficiently large $m$.

By construction, $\phi(D) \le D$, so we may form the ascending union $D_{\infty}$. The group $D_{\infty}$ belongs to the variety of $D$ and hence is soluble of length at most $k$. Let $n$ denote the (finite) index of $D$ in $B$. For each $j$, we see that 
$$\qquad\ \ [\tau_j(B)D_\infty:D_\infty]=[\tau_j(B):\tau_j(B)\cap D_\infty]\le[\tau_j(B):\tau_j(D)]\le[B:D]=n,$$ and so 
we deduce that $[B_\infty:D_\infty]\le n$.  Thus, $B_{\infty}$ is virtually soluble.

\item It is clear that $\tau_i(B)$ has at most the Pr\"ufer rank of $B$ for each $i$, and the Pr\"ufer rank of $B_{\infty}$ is the supremum of the Pr\"ufer ranks of $\tau_i(B)$.

\item
Consider first the case when $B$ is torsion-free abelian. In this case, $\phi$ induces a $\Qb$-linear map $\theta: B\otimes\Qb\to B\otimes\Qb$.  Since $B$ has finite Hirsch length, $B \otimes \Qb$ is finite-dimensional over $\Qb$.  By replacing $B$ with $\phi^i(B)$ for sufficiently large $i$, we may assume $h(\phi(B)) = h(B)$ and hence ensure that $\theta$ has full rank, in other words $\theta$ is an automorphism.  Upon choosing a basis $v_1,\dots,v_d$ of $B\otimes\Qb$, we obtain a matrix $\Theta$ corresponding to $\theta$ whose entries, being finite in number, belong to a subring $\Zb[1/n]$ for a choice of common denominator $n$. Let $m$ be the product of the finitely many primes $q$ for which $B$ has Pr\"ufer $q$-group $C_{q^\infty}$ as a section.
It is now clear that $\theta$ induces an automorphism of $B\otimes\Zb[1/nm]$ 
and that $B\otimes\Zb[1/nm]\cong B_\infty\otimes\Zb[1/nm]$ is a free $\Zb[1/nm]$-module of rank $d$ and belongs to $\mathfrak M$. This shows that $B_\infty \in \mathfrak M$ and also that $h(B_{\infty}) \le h(B)$.

Suppose that $B$ is an abelian $\mathfrak M$-group with torsion subgroup $T$. Then $\phi(T)$ is torsion, so $\phi(T) \le T$.  Torsion $\mathfrak M$-groups satisfy the minimal condition on subgroups, so $\phi$ restricts to a surjective map on $\phi^j(T)$ for some $j$.  Then 
$$\forall k \ge 0: \; \tau_0(T) = \tau_{j+k}(\phi^{j+k}(T)) = \tau_{j+k}(\phi^j(T)) = \tau_k(T),$$
so we see that $B_\infty$ is $\tau_0(T)$ by the direct limit of iterating an endomorphism of $B/T$.  From the torsion-free case, we conclude that $B_{\infty} \in \mathfrak M$.

For the case when $B$ is a soluble $\mathfrak M$-group, we proceed by induction on the derived length.  Since the commutator subgroup is verbal, we see that $[B_{\infty},B_{\infty}] = [B,B]_{\infty}$. Our inductive hypothesis implies that $[B_{\infty},B_{\infty}]$ is an $\mathfrak M$-group, and on the other hand, $B_{\infty}/[B_{\infty},B_{\infty}]$ is the direct limit of iterating the endomorphism of $B/[B,B]$ induced by $\phi$. We deduce that $B_{\infty}/[B_{\infty},B_{\infty}]$ is an $\mathfrak M$-group by the abelian case, and hence $B_{\infty} \in \mathfrak M$.  A similar induction argument on the derived length shows that $h(B_{\infty}) \le h(B)$.  

Finally, consider the general virtually soluble case. The argument used to prove (1) shows that there is a soluble subgroup $D$ of finite index in $B$ such that $\phi(D) \le D$. We know that $D_\infty \in \mathfrak M$ and $h(D_{\infty}) \le h(D)$ by the soluble case, and the result follows since $D_\infty$ has finite index in $B_\infty$.
\end{enumerate}
\end{proof}

\begin{lem}\label{lem:pk24.2}
Let $G$ be a finitely generated group acting on a tree $T$ in such a way that there is no global fixed point and there is a unique fixed end. Then there is a vertex $u$ and a hyperbolic element $t\in G$ such that $G_u\subset tG_ut^{-1}$ and $G$ is the ascending HNN-extension $G_u{*}_{G_u,t}$.
\end{lem}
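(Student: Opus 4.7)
The plan is to exploit the Busemann character $\beta : G \to \mathbf{Z}$ associated to the fixed end $\xi$. Fixing a vertex $v_0$ and setting $\beta(g) := \lim_{x \to \xi}(d(gv_0, x) - d(v_0, x))$, the hypothesis that $G$ fixes $\xi$ makes $\beta$ a well-defined homomorphism whose kernel is exactly the set of elements of $G$ that fix some ray toward $\xi$. I would first observe that $\beta(G) \ne 0$: otherwise every generator in a finite generating set would fix some ray toward $\xi$, and since any two rays toward the same end of a tree share a common subray, the generators would fix a common vertex, contradicting the hypothesis that $G$ has no global fixed point. Writing $\beta(G) = m\mathbf{Z}$ with $m > 0$, I pick $t \in G$ with $\beta(t) = m$; such a $t$ is hyperbolic, and its axis $L$ has $\xi$ as one of its ends, with $t$ translating toward $\xi$.

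Fix any vertex $u$ on $L$. The containment $G_u \subset tG_ut^{-1}$ is immediate: any $g \in G_u$ fixes $u$ and preserves $\xi$, hence preserves setwise the unique ray from $u$ to $\xi$; since this ray is fixed at its endpoint $u$, an easy induction on distance shows that $g$ fixes it pointwise, so $g\cdot tu = tu$ and hence $g \in G_{tu} = tG_ut^{-1}$. To show $G = \langle G_u, t\rangle$, take $g \in G$ and set $h := t^{-k}g$ with $k := \beta(g)/m$; then $h \in \ker\beta$ fixes some ray toward $\xi$, which eventually coincides with the forward ray of $L$. So $h$ fixes some vertex $v$ on $L$ lying between $t^ju$ and $t^{j+1}u$ for some $j$; by the same argument $h$ fixes the ray from $v$ to $\xi$, which passes through $t^{j+1}u$, so $h \in G_{t^{j+1}u} = t^{j+1}G_ut^{-(j+1)}$, placing $g = t^k h$ inside $\langle G_u, t\rangle$.

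The final step is to identify $G$ with the abstract ascending HNN-extension $H := G_u *_\phi$, where $\phi(g) := t^{-1}gt$ is an injective endomorphism of $G_u$. The natural homomorphism $\pi : H \to G$ sending the stable letter to $t$ is surjective by the previous paragraph; for injectivity, I would appeal to the Britton normal form for ascending HNN-extensions, according to which every element of $H$ has a unique representation $s^a g s^{-b}$ with $a, b \ge 0$, $g \in G_u$, and $g \notin \phi(G_u)$ whenever $a, b > 0$. If such a word lies in $\ker\pi$, then $g = t^{b-a}$ in $G$; since $t$ is hyperbolic, $t^{b-a}$ can fix $u$ only when $a = b$, and then $g = 1$. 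But $1 \in \phi(G_u)$ contradicts the normal form condition unless $a = b = 0$, so the word was trivial. The main obstacle is the careful bookkeeping in the HNN normal form, particularly when $m > 1$ so that $L$ contains intermediate vertices outside the $\langle t\rangle$-orbit of $u$; one must verify that their stabilizers still lie in $\langle G_u, t\rangle$, which follows from the same horocycle-preservation argument used for step two.
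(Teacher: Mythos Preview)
Your approach is correct and close in spirit to the paper's, but packaged differently. The paper argues directly with the partial order $v \to w$ (meaning the ray from $v$ toward the fixed end passes through $w$), observes that this makes the vertex stabilizers into a directed system whose union is the set $H$ of elliptic elements, picks any hyperbolic $t$ and a vertex $u$ on its axis, shows $H = \bigcup_j t^jG_ut^{-j}$, and then simply declares ``the result is clear.'' You instead pass through the Busemann homomorphism $\beta$, which has two advantages: it makes the choice of $t$ canonical (you pick $t$ so that $\beta(t)$ generates $\beta(G)$, which is genuinely needed to get $G = \langle G_u, t\rangle$; the paper's proof glosses over this), and it gives you an explicit exponent-sum map with which to run the Britton normal-form argument for injectivity of $H \to G$. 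The paper's version is more elementary but sketchier; yours is more structured and fills in the endgame explicitly.

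One small slip: with your definition $\beta(g) = \lim_{x\to\xi}(d(gv_0,x)-d(v_0,x))$, an element translating \emph{toward} $\xi$ has $\beta < 0$, not $\beta > 0$. So either take $\beta(t) = -m$, or flip the sign in the definition of $\beta$; otherwise your claim that $tu$ lies on the ray from $u$ to $\xi$ fails. This is cosmetic and does not affect the argument. Your closing remark about intermediate vertices on $L$ when $m>1$ is a non-issue: you have already shown $\ker\beta = \bigcup_j t^jG_ut^{-j}$ via the ray-fixing argument, so nothing further is required for $G = \langle G_u, t\rangle$.
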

\begin{proof}
We can define a partial ordering $\to$ on the set of vertices of $T$ by declaring that $v\to w$ when the geodesic ray starting from $v$ and traveling towards the fixed end passes through $w$. This makes the vertex set into a directed set because the rays from any two vertices that head towards the fixed end eventually coalesce and so reach a vertex to which they both point. Note also that if $v\to w$ then $G_v\subset G_w$. The set $H$ of elliptic elements of $G$ is thus the directed union of the vertex stabilizers. If $H=G$, then $H$ is finitely generated and so is contained in a vertex stabilizer, and this contradicts the assumption that $G$ has no global fixed point. Therefore $H\ne G$ and $G$ contains a hyperbolic element $t$. Let $L$ be the axis of $t$ and fix a vertex $u$ on $L$. Replacing $t$ by $t^{-1}$ if necessary we may assume that $u\to tu$. If $h$ is any element of $H$, then $H$ fixes a vertex $w$, and there is a $j\ge0$ such that $w\to t^j u$. This shows that $H=\bigcup_jt^jG_ut^{-j}$, and the result is clear.
\end{proof}

\begin{prop} Let $G$ be a finitely generated residually finite group that is the fundamental group of a graph of virtually soluble groups of finite Pr\"ufer rank and of Hirsch length $n$. Then $G$ has a soluble normal subgroup $N$ such that one of the following holds.
\begin{enumerate}
\item $N$ fixes points on the corresponding Bass--Serre tree of $G$ and has Hirsch length $n$, and $G/N$ is the fundamental group of a graph of locally finite groups. If in addition all the vertex and edge groups of the graph of groups belong to $\mathfrak M$, then $G/N$ is virtually free.
\item $N$ has no fixed points on the corresponding Bass--Serre tree of $G$, $G$ fixes a unique end and is a virtually soluble $\mathfrak M$-group of Hirsch length $n+1$, and $G/N$ is infinite and virtually cyclic. Note that in this case all vertex and edge stabilizers automatically belong to $\mathfrak M$ and that $G/N$ is virtually free of rank $1$.
\end{enumerate}
\end{prop}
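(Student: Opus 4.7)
The plan is to analyze the action of $G$ on the Bass--Serre tree $T$ of the given graph of groups decomposition. After passing to the unique minimal $G$-invariant subtree (which exists since $G$ is finitely generated), the central dichotomy is whether $G$ stabilizes an end of $T$; by standard tree arguments, any fixed end must be unique. The fixed-end case yields (2) and the no-fixed-end case yields (1).

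\textit{Case (2): $G$ fixes a unique end.} Lemma~\ref{lem:pk24.2} expresses $G$ as an ascending HNN-extension $G = G_u *_{G_u, t}$ with $G_u \subsetneq tG_u t^{-1}$ and $u$ a vertex on the axis of $t$. I set $N := \bigcup_{j \geq 0} t^j G_u t^{-j}$, an ascending union of conjugates of $G_u$ stable under $t$-conjugation; since $G = \langle G_u, t \rangle$ normalizes $N$, $N$ is normal in $G$. Identifying $N$ with the direct limit of $G_u$ under the injective endomorphism induced by the inclusion $G_u \hookrightarrow tG_ut^{-1}$ (transported back to $G_u$ by conjugation by $t^{-1}$), Lemma~\ref{lem:pk24.1} gives that $N$ is virtually soluble, belongs to $\mathfrak M$, and $h(N) \leq h(G_u)$; equality holds because $N \supseteq G_u$. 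Since vertices on the axis of $t$ have conjugate stabilizers of maximal Hirsch length, $h(G_u) = n$. Finally, $G/N$ is generated by the image of $t$ and is infinite cyclic via the HNN length function $G \to \mathbf{Z}$, which has $N$ in its kernel. Hence $G \in \mathfrak M$ with $h(G) = n + 1$, and all assertions of (2) are proved.

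\textit{Case (1): $G$ fixes no end.} Here the aim is to produce a normal soluble $N \lhd G$ contained in some vertex stabilizer $G_v$ with $h(G_v) = n$, satisfying $h(N) = n$. Pick such a vertex $v$; then $G_v$ is a virtually soluble $\mathfrak M$-subgroup, and the residual finiteness of $G$ makes $G_v$ relatively residually finite. Since $G$ does not fix an end of $T$, $G_v$ is generally not commensurated in $G$, so Corollary~\ref{cor:pk4.1} does not apply directly to $(G, G_v)$. The strategy is to work instead inside $\mathrm{Comm}_G(G_v)$, where $G_v$ is commensurated, and invoke Corollary~\ref{cor:pk4.1} together with Lemma~\ref{lem:pkvar} (controlling the variety) to produce a finite-index subgroup $K$ of $G_v$ that is normal in $\mathrm{Comm}_G(G_v)$. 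The desired $G$-normal $N$ is then obtained from $K$ by forming a suitable intersection of finitely many $G$-conjugates; the finite Pr\"ufer rank of $G_v$ restricts the commensurability classes among these intersections, ensuring that only finitely many are needed and that $h(N) = n$ is preserved inside $G_v$. Once $N$ is in hand, $G/N$ acts on $T$ with each vertex stabilizer $G_w/(G_w \cap N)$ of Hirsch length zero, hence \v{C}ernikov and locally finite, and similarly for edge stabilizers; this yields the graph of locally finite groups description. Under the additional hypothesis that the original vertex and edge groups lie in $\mathfrak M$, finite generation forces these \v{C}ernikov quotients to be finite, so $G/N$ is virtually free by the classical structure theorem for finitely generated groups acting on trees with finite stabilizers.

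\textit{Main obstacle.} The principal difficulty is case (1): $G_v$ is not commensurated in $G$, so the Main Theorem is not applicable directly. Passing through $\mathrm{Comm}_G(G_v)$ yields only a normal subgroup of the commensurator, and naive intersections of $G$-conjugates of a finite-index subgroup of $G_v$ would drop the Hirsch length. The crux is to use finite Pr\"ufer rank, together with relative residual finiteness, to bound the commensurability classes of such intersections so that a single finite intersection still realizes Hirsch length $n$ inside $G_v$ and remains $G$-normal.
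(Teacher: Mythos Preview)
Your Case~(1) strategy has a genuine gap, and in fact you have misidentified the obstacle. You write that $G_v$ is generally not commensurated in $G$ and propose to work around this via $\mathrm{Comm}_G(G_v)$ and finite intersections of conjugates, but you leave the crux unproved: why a finite intersection suffices and why Hirsch length $n$ survives. The paper's key observation dissolves this problem entirely. Rather than working with $G_v$, one chooses for each vertex or edge $y$ a \emph{finitely generated} subgroup $H_y \le G_y$ of Hirsch length $n$; then Lemma~\ref{lem:pk3} (applied along the edges of any geodesic) shows that all the $H_y$ are pairwise commensurate, and since $gH_v g^{-1}$ is a finitely generated subgroup of $G_{gv}$ of Hirsch length $n$, it is commensurate with $H_{gv}$ and hence with $H_v$. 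Thus $H_v$ \emph{is} commensurated in $G$, and Corollary~\ref{cor:Commens=>Insep} (together with Lemma~\ref{lem:pkvar} to preserve solubility) applies directly to produce a soluble normal subgroup $N \trianglelefteq G$ with $h(N \cap G_y)=n$ for every $y$. No passage through the commensurator and no ad hoc intersection argument is needed.

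Your Case~(2) also has a defect: the $N$ you build, namely $\bigcup_j t^j G_u t^{-j}$, is only \emph{virtually} soluble by Lemma~\ref{lem:pk24.1}, whereas the proposition requires $N$ to be soluble. The paper avoids this by constructing the soluble $N$ first (as above, before any case division) and only then applying Tits' trichotomy for soluble actions on trees to $N$: either $N$ fixes a point (Case~(1)), or $N$ fixes a unique end (forcing $G$ to do so as well, giving Case~(2)), or $N$ stabilizes a unique pair of ends (which is ruled out using separability of $N$ and residual finiteness of $G$). Reorganizing your argument so that $N$ is produced \emph{before} the case split, via the commensurated $H_v$, fixes both issues at once.
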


\begin{proof}
Let $T$ denote the Bass--Serre tree $T$ afforded by the hypothesized graph of groups. For each vertex or edge $y\in VT\sqcup ET$, let $H_y$ be a finitely generated subgroup of the stabilizer $G_y$ of Hirsch length $n$. Note first that if $e$ and $f$ are edges that are incident with the same vertex $v$, then $\langle H_e\cup H_f\rangle$ is a finitely generated subgroup of $G_v$, and therefore by Lemma \ref{lem:pk3}, $\langle H_e\cup H_f\rangle$ is commensurate with $H_v$. Lemma \ref{lem:pk3} also shows that $H_e$ and $H_f$ have finite index in
$\langle H_e\cup H_f\rangle$ and so we deduce that $H_e$, $H_v$ and $H_f$ are all commensurate. Suppose $v$ and $w$ are any two vertices in $T$. By considering the edges and vertices on the geodesic from $v$ to $w$, we deduce that $H_v$ and $H_w$ are commensurate. 

Fix any vertex $v$ and let $H$ be a soluble subgroup of finite index in $H_v$.
The profinite closure $\overline H$ of $H$ in $G$ is  soluble, with the same derived length as $H$, by Lemma~\ref{lem:pkvar}. Additionally, $\overline H$ has a finite index subgroup $N$ which is a separable normal subgroup of  $G$ by Corollary~\ref{cor:Commens=>Insep}. The group $N$ is a soluble normal subgroup of $G$, and for every vertex or edge $y$ of the tree, the intersection $N \cap G_y$ has Hirsch length  $n = h(G_y)$ and the quotient $G_y/N\cap G_y$ is locally finite, residually finite, and of finite Pr\"ufer rank. 

A soluble group acting without inversion on a tree fixes a point or a unique end or stabilizes a unique pair of ends; see for example \cite[Corollary~2]{Tits_LieKolchin}. Suppose toward a contradiction that there is a unique pair of ends fixed by $N$ but that $N$ does not fix any vertex or edge. In this case, $N$ stabilizes the line joining the two ends and so does $G$. There is then a homomorphism $\xi$ from $G$ to an infinite cyclic or dihedral group induced by the action of $G$ on the line.  On the other hand, $H$ acts on the line as a group of order at most $2$, since $H$ fixes a vertex.  By replacing $H$ with a subgroup of finite index as necessary, we can ensure that $H\leq \ker\xi$. The quotient $G/\ker\xi$ is residually finite, so the profinite closure of $H$  and hence also $N$ act trivially on the line. This contradicts the assumption that $N$ has no fixed points.

Suppose that $N$ has a fixed point in $T$. In this case $G/N$ acts on the subtree $T^N$ of $N$-fixed points and is the fundamental group of a graph of locally finite groups as claimed. For any vertex $u$ of $T^N$, we additionally have $N\subset G_u$. Thus, $N\cap G_u=N$, so $N$ has Hirsch length $n$.  Moreover, the quotient $G_u/N$ is a residually finite and locally finite $\mathfrak{M}$-group, so  \v{C}ernikov's theorem \cite[1.4.1]{LennoxRobinson} ensures that $G_u/N$ is finite. The quotient $G/N$ is then a graph of finite groups and thus is a virtually free group since $G$ is finitely generated.  We thus obtain case (1).

Suppose that $N$ fixes a unique end of $T$ but does not fix any vertices. In this case $G$ has the same property, and Lemma \ref{lem:pk24.2} shows that $G$ is an ascending HNN-extension over one of its vertex stabilizers $G_y$. The group $G$ is thus of the form $B_{\infty} \rtimes \Zb$ where $B_{\infty}$ is a virtually soluble group of finite Pr\"ufer rank and Hirsch length $n$ by Lemma~\ref{lem:pk24.1}. Thus, $G$ has Hirsch length $n+1$ and is virtually soluble. Since $G$ is also assumed to be finitely generated, it belongs to $\mathfrak M$, by Proposition~\ref{prop:phk}. Finally, the quotient $G_y/N\cap G_y$ is an $\mf{M}$-group as well as residually finite and locally finite. Any residually finite and torsion $\mathfrak M$-group  is finite by \v{C}ernikov's theorem \cite[1.4.1]{LennoxRobinson}, so $G_y/N_y$ is finite. Since $N$ is normal, $\bigcup_{t\geq 1} t^nN_yt^{-n}$ is a subgroup of $N$ where $t$ is the generator of $\Zb$ that translates toward the fixed end. Just as in the proof of Lemma~\ref{lem:pk24.1}, we deduce that $B_{\infty}/\bigcup_{n\geq 1}t^nN_yt^{-n}$ is finite, and thus, $G/N$ is finite-by-cyclic. It follows that $G/N$ is virtually cyclic, so we obtain case (2).
\end{proof}

\section{Applications to groups acting on graphs and metric spaces}
Commensurated subgroups arise naturally from actions on locally finite graphs. We thus obtain several consequences for such group actions. 

\begin{cor}\label{cor:Graphs}
	Let $X$ be a connected locally finite graph (without multiple edges) and $G \leq \Aut(X)$ be a finitely generated vertex-transitive group.  If the stabilizer $G_v$ of a vertex $v \in VX$ is weakly separable, then it is finite.
\end{cor}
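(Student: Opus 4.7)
The plan is to verify that the hypotheses of Corollary~\ref{cor:Commens=>Insep}(i) apply to $H := G_v \leq G$, conclude that $G_v$ contains a finite-index subgroup $N$ that is normal in $G$, and then use vertex-transitivity together with the fact that $X$ is a simple graph to force $N$ to be trivial.

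First I would check commensuration. For any $g \in G$, we have $gG_vg^{-1} = G_{gv}$, so $G_v \cap gG_vg^{-1}$ is the pointwise stabilizer of the two-element set $\{v, gv\}$. Its index in $G_v$ equals the cardinality of the $G_v$-orbit of $gv$, which is contained in the sphere of radius $d(v,gv)$ around $v$. Since $X$ is connected and locally finite, this sphere is finite, so $[G_v : G_v \cap gG_vg^{-1}] < \infty$. Hence $G_v$ is commensurated in $G$.

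Next, $G$ is finitely generated by hypothesis, say $G = \langle g_1,\dots,g_n \rangle$; in particular $G$ is generated by the finitely many cosets $g_1 G_v, \dots, g_n G_v$. Combined with the assumption that $G_v$ is weakly separable, we are in the situation of Corollary~\ref{cor:Commens=>Insep}(i), which yields a subgroup $N \leq G_v$ of finite index in $G_v$ with $N$ normal in $G$.

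The final step is to show that $N = \{1\}$. Since $N$ is normal in $G$ and $G$ acts transitively on $VX$, we have, for every vertex $w = gv$, the inclusion $N = gNg^{-1} \leq gG_vg^{-1} = G_w$. Thus $N$ fixes every vertex of $X$. Because $X$ has no multiple edges, an automorphism of $X$ is determined by its action on vertices, so the only element of $\Aut(X)$ fixing every vertex is the identity. Therefore $N = \{1\}$, and $G_v$ is finite as claimed. The whole argument is essentially a direct unpacking of Corollary~\ref{cor:Commens=>Insep}(i); the only genuine input specific to the setting is the observation that local finiteness of $X$ makes the vertex stabilizer commensurated, and that there are no hidden ``ghost'' automorphisms fixing every vertex thanks to the simple-graph hypothesis.
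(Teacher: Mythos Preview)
Your proof is correct and follows essentially the same approach as the paper. The paper invokes Corollary~\ref{cor:TrivCore} (which is itself an immediate consequence of Corollary~\ref{cor:Commens=>Insep}) after noting that $\bigcap_{g\in G} gG_vg^{-1}=\{1\}$, whereas you simply unpack that corollary inline by producing the finite-index normal subgroup $N\leq G_v$ and showing directly that $N$ fixes every vertex; the underlying logic is identical.
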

\begin{proof}
	Since $G$ acts vertex-transitively, we have $\{1\} =\bigcap_{w \in VX} G_w = \bigcap_{g \in G} g G_v g^{-1}$. On the other hand, the stabilizer $G_v$ is a commensurated subgroup  of $G$  since $X$ is connected and locally finite. The conclusion now follows from Corollary~\ref{cor:TrivCore}.
\end{proof}

\begin{cor}\label{cor:Trees}
	Let $X$ be a proper uniquely geodesic metric space (e.g. a locally finite tree) and let $G \leq \mathrm{Isom}(X)$ be a finitely generated group. Suppose that there is no proper $G$-invariant convex subspace. If the orbit of a point $v \in X$ is discrete and its stabilizer $G_v$ is weakly separable, then $G_v$ is finite.
\end{cor}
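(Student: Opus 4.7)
The plan is to deduce the statement from Corollary~\ref{cor:TrivCore} applied to $H := G_v$. Since $G$ is finitely generated, say by $g_1, \dots, g_n$, the finite collection of cosets $G_v, g_1 G_v, \dots, g_n G_v$ clearly generates $G$, so the hypothesis that $G$ be generated by finitely many cosets of $G_v$ is automatic. By hypothesis $G_v$ is weakly separable, so it remains to verify two conditions: that $G_v$ is commensurated in $G$, and that its normal core $N := \bigcap_{g \in G} g G_v g^{-1}$ is trivial.

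For commensuration, fix $g \in G$. Since the orbit $Gv$ is discrete and $X$ is proper, the intersection $Gv \cap \overline{B}(v, d(v, gv))$ is a discrete subset of a compact ball, hence finite. The group $G_v$ preserves this set because its elements preserve distance from $v$. The stabilizer in $G_v$ of the point $gv$ therefore has finite index in $G_v$; but that stabilizer is exactly $G_v \cap G_{gv} = G_v \cap g G_v g^{-1}$, so $G_v$ is commensurated.

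For triviality of $N$, I will argue that $N$ fixes all of $X$ pointwise, whence $N = \{1\}$ since $G \leq \mathrm{Isom}(X)$ acts faithfully. By construction, each $n \in N$ satisfies $n \cdot hv = hv$ for every $h \in G$, so $N$ fixes the orbit $Gv$ pointwise. Set $F := \mathrm{Fix}_X(N)$. The hypothesis that $X$ is uniquely geodesic makes $F$ convex: for any two points $x, y \in F$ and any $n \in N$, the image under $n$ of the unique geodesic from $x$ to $y$ is a geodesic from $x$ to $y$, hence (by uniqueness and the fact that isometries respect arclength parameterization) agrees with the original pointwise. Moreover $F$ is $G$-invariant: if $x \in F$ and $h \in G$, normality of $N$ yields $h^{-1}Nh = N \subseteq G_x$, whence $N \subseteq G_{hx}$. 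Thus $F$ is a nonempty $G$-invariant convex subset of $X$, so the hypothesis on $G$-invariant convex subspaces forces $F = X$, and Corollary~\ref{cor:TrivCore} then concludes that $G_v$ is finite. The most delicate ingredient here is the convexity of $F$, which is precisely where the unique geodesicity assumption is essential; the rest of the argument reduces to tidily checking that the quantitative hypotheses (discreteness of the orbit, properness of $X$, finite generation of $G$) feed correctly into the structural conditions of Corollary~\ref{cor:TrivCore}.
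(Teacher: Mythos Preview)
Your proof is correct and follows essentially the same route as the paper's: both verify the hypotheses of Corollary~\ref{cor:TrivCore} by using properness plus discreteness of the orbit to get commensuration, and unique geodesicity plus the absence of proper $G$-invariant convex subspaces to force the normal core of $G_v$ to be trivial. The only cosmetic difference is that the paper phrases the core argument via the convex hull of the orbit $Gv$, whereas you work directly with the fixed-point set $\mathrm{Fix}_X(N)$; these are dual formulations of the same observation.
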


\begin{proof}
The hypothesis of absence of  proper $G$-invariant convex subspace implies that for any vertex $v \in X$, the intersection $\bigcap_{g \in G} g G_v g^{-1}$ is trivial, because it fixes pointwise a $G$-invariant subspace, namely the convex hull of the $G$-orbit of $v$.  The hypothesis that $X$ is proper and that $Gv$ is discrete ensures that $G_v$ has finite orbits on $Gv$, so $G_v$ is commensurated.  The conclusion now follows from Corollary~\ref{cor:TrivCore}.
\end{proof}


The terminology in the following application is borrowed from \cite{BuMo}.

\begin{cor}\label{cor:LocallyQuasiPrimitive}
	Let $T$ be a locally finite tree all of whose vertices have degree~$\geq 3$ and let $G \leq \Aut(T)$ be a non-discrete finitely generated subgroup whose action on $T$ is locally quasi-primitive. For $v \in VT$ a vertex, the residual closure of the vertex stabilizer $G_v$ in $G$ is of finite index in $G$.
\end{cor}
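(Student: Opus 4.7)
The plan is to apply the Main Theorem to the commensurated subgroup $G_v$. Since $T$ is locally finite, $G_v$ is commensurated in $G$, and finite generation of $G$ implies $G$ is generated by finitely many cosets of $G_v$. Setting $N := \bigcap_{g \in G} g\widetilde{G_v}g^{-1}$, the Main Theorem produces a normal subgroup $N \triangleleft G$ of finite index in $\widetilde{G_v}$, satisfying $\widetilde{G_v} = NG_v$ and $[G_v : N\cap G_v] < \infty$. Consequently, $[G:\widetilde{G_v}]<\infty$ if and only if $[G:N]<\infty$, so I will prove the latter.

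Local quasi-primitivity entails local transitivity at every vertex; together with connectedness of $T$, this implies that $G$ acts edge-transitively on $T$, so $G$ has at most two orbits on $VT$. Pick a neighbor $v'$ of $v$ and set $e := \{v, v'\}$. The edge stabilizer $G_e$ has finite index in both $G_v$ and $G_{v'}$, so $N \cap G_e$ is of finite index in each vertex stabilizer, giving $[G_{v'} : N\cap G_{v'}] < \infty$. Moreover, non-discreteness of $G$ in the topology inherited from $\Aut(T)$ forces $G_v$ to be infinite, hence $N\cap G_v$ is infinite as well.

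Let $\pi_w : G_w \to \Sym(E(w))$ denote the local action at $w$. For $w \in \{v, v'\}$, the image $\pi_w(N\cap G_w)$ is a finite-index normal subgroup of the quasi-primitive group $\pi_w(G_w)$, so it is either trivial or transitive. In the case that both images are trivial, normality of $N$ and conjugation by elements of $G$ propagate this triviality along each of the at-most-two vertex orbits, so $\pi_w(N \cap G_w)$ is trivial at every vertex $w$. An induction on ball radius then forces $N \cap G_v$ to fix $T$ pointwise, contradicting both the faithfulness of the action on $T$ and the infinitude of $N \cap G_v$. Thus at least one of $\pi_v(N\cap G_v)$, $\pi_{v'}(N\cap G_{v'})$ is transitive.

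In the easy case where both images are transitive, conjugation gives $\pi_w(N\cap G_w)$ transitive for every $w$, whence a path argument shows $N$ is edge-transitive on $T$ and $T/N$ has at most two vertices. The asymmetric case, where without loss of generality $\pi_v$ is trivial and $\pi_{v'}$ is transitive, is the most delicate step: here a path-length induction along geodesics from $v$ should show that $N$ acts transitively on the $G$-orbit $Gv$, after which transitivity of $N$ on $Gv$ lets one represent every $N$-orbit on $Gv'$ by a neighbor of $v$, bounding the number of $N$-orbits on $VT$ by $1 + \deg(v)$. In all surviving cases, $T/N$ is a finite graph on which $G/N$ acts with each vertex stabilizer isomorphic to the finite group $G_w/(N\cap G_w)$; therefore $G/N$ is finite, yielding $[G:\widetilde{G_v}]\leq [G:N]<\infty$.
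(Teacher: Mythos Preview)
Your overall strategy is sound and leads to a correct proof, but it diverges from the paper's argument after the invocation of the Main Theorem. Both proofs start identically: $H=G_v$ is commensurated, $G$ is generated by finitely many cosets of $H$, and the Main Theorem yields a normal subgroup $N\trianglelefteq G$ with $[G_v:N\cap G_v]<\infty$ and $\widetilde{G_v}=NG_v$. From here the paper simply observes that $N\cap G_v$ is infinite (so $N$ does not act freely on $VT$) and then invokes \cite[Lemma~1.4.2]{BuMo}, which for a locally quasi-primitive action guarantees that any normal subgroup not acting freely has only finitely many vertex orbits; finiteness of $[G:\widetilde{G_v}]=[G:NG_v]$ follows since $NG_v$ is the stabiliser of $Nv$ in the $G$-action on the finite set $VT/N$.

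You instead reprove the relevant content of that Burger--Mozes lemma by hand: analysing the local actions $\pi_w(N\cap G_w)$, using quasi-primitivity to force each to be trivial or transitive, ruling out the ``both trivial'' case by a ball-radius induction, and handling the transitive and asymmetric cases by path arguments to bound $|VT/N|$. This is a legitimate, more self-contained route; what it buys is independence from the external reference, at the cost of length. One point of presentation: in the asymmetric case you write that a path-length induction ``should'' show $N$ is transitive on $Gv$. The induction does work---at each vertex of $Gv'$ along the geodesic, $N\cap G_w$ acts transitively on its neighbours in $Gv$, so one climbs two steps at a time---but you should spell this out rather than leave it as a promissory note. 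With that filled in, the argument is complete.
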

\begin{proof}
	Since $T$ is locally finite, the vertex stabilizer $H := G_v$ is commensurated. The group $H$ is also infinite since $G$ is non-discrete. Let $N := \bigcap_{g \in G} g \widetilde H g^{-1} \normal G$  be the normal subgroup afforded by the Main Theorem.  
	Since $N$ contains a finite index subgroup of $H$, its action on $VT$ is not free. Therefore, \cite[Lemma~1.4.2]{BuMo} implies that the  $N$-action on $T$ has finitely many orbits of vertices.   In particular, $N H = \widetilde{H}$ is of finite index in $G$. 
\end{proof}	

\section{Applications to lattices in products of groups}
Lattices in products of totally disconnected locally compact groups often have interesting commensurated subgroups. We here apply our work to shed light on these subgroups.

The equivalence between (i) and (iv) in the following result is due to M.~Burger and S.~Mozes  \cite[Proposition~1.2]{BuMo2}. Our inspiration for the equivalence between (ii) and (iv) came  from contemplating D. Wise's iconic example constructed in \cite[Example~4.1]{Wise}; see also \cite{Wise_PhD}. The equivalence between  (iii) and (iv) is closely related to, but not a  formal consequence of, another result of Wise \cite[Lemmas~5.7 and~16.2]{Wise_Figure8}. 

\begin{cor}\label{cor:ProductOIfTrees}
	Let $T_1 , T_2$ be leafless trees and let $\Gamma \leq \Aut(T_1) \times \Aut(T_2)$ be a discrete subgroup acting cocompactly on $T_1 \times T_2$. Then the following assertions are equivalent. 
	\begin{enumerate}[label=(\roman*)]
		\item There exists  $i \in \{1, 2\}$ such that the projection $\mathrm{pr}_i(\Gamma) \leq \Aut(T_i)$ is discrete. 
		
		\item There exists $i \in \{1, 2\}$ and a vertex $v \in VT_i$ such that the stabilizer $\Gamma_{v}$ is a weakly separable subgroup of $\Gamma$.
		
		\item For all $i \in \{1, 2\}$ and all $v \in VT_i$, the stabilizer $\Gamma_{v}$ is a separable subgroup of $\Gamma$.
		
		\item The groups $\Gamma_1 = \{g \in \Aut(T_1) \mid (g,1) \in \Gamma\}$ and $\Gamma_2 = \{g \in \Aut(T_2) \mid (1, g) \in \Gamma\}$ act cocompactly on $T_1$ and $T_2$ respectively, and the product $\Gamma_1 \times \Gamma_2$ is of finite index in $\Gamma$.
	\end{enumerate}
\end{cor}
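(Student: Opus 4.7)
The plan is to close the chain $(\mathrm{iii}) \Rightarrow (\mathrm{ii}) \Rightarrow (\mathrm{i}) \Leftrightarrow (\mathrm{iv}) \Rightarrow (\mathrm{iii})$. The implication $(\mathrm{iii}) \Rightarrow (\mathrm{ii})$ is immediate from the definitions, and $(\mathrm{i}) \Leftrightarrow (\mathrm{iv})$ is \cite[Proposition~1.2]{BuMo2}, which I would invoke as a black box. The two implications requiring work are $(\mathrm{iv}) \Rightarrow (\mathrm{iii})$ and, above all, $(\mathrm{ii}) \Rightarrow (\mathrm{i})$, which is where the Main Theorem enters.

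For $(\mathrm{iv}) \Rightarrow (\mathrm{iii})$, fix $v \in VT_1$ (the case $v \in VT_2$ is symmetric). Since $\Gamma_2$ acts trivially on $T_1$ it fixes $v$, and since $\Gamma_1$ is discrete in $\Aut(T_1)$ with $T_1$ locally finite, the stabilizer $(\Gamma_1)_v$ is finite. Together with $[\Gamma : \Gamma_1 \times \Gamma_2] < \infty$, this yields $[\Gamma_v : \Gamma_2] < \infty$. On the other hand $\Gamma/\Gamma_2 \cong \mathrm{pr}_1(\Gamma)$ is a discrete subgroup of $\Aut(T_1)$ acting cocompactly on the locally finite tree $T_1$; by Bass--Serre theory it is virtually free, hence residually finite. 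Any finite subgroup of a residually finite group is separable, so $\Gamma_v/\Gamma_2$ is separable in $\Gamma/\Gamma_2$, and pulling back along the quotient map shows that $\Gamma_v$ is separable in $\Gamma$.

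The heart of the argument is $(\mathrm{ii}) \Rightarrow (\mathrm{i})$. Suppose $\Gamma_v$ is weakly separable for some $v \in VT_1$. Local finiteness of $T_1$ makes $\Gamma_v$ commensurated in $\Gamma$, and $\Gamma$ is finitely generated as a discrete cocompact subgroup of $\Aut(T_1)\times\Aut(T_2)$ (equivalently, by applying the Milnor--\v{S}varc lemma to the proper geodesic space $T_1\times T_2$), so Corollary~\ref{cor:Commens=>Insep}(i) produces a subgroup $N \leq \Gamma_v$ of finite index and normal in $\Gamma$. The fixed-point set $F \subseteq T_1$ of $\mathrm{pr}_1(N)$ is a subtree containing $v$, and normality of $N$ forces $F$ to be $\mathrm{pr}_1(\Gamma)$-invariant, so $F \supseteq \mathrm{pr}_1(\Gamma)\cdot v$. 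If one knows $F = T_1$, then $\mathrm{pr}_1(N) = \{1\}$, so $N \leq \Gamma_2$, whence $\mathrm{pr}_1(\Gamma)_v \cong \Gamma_v/\Gamma_2$ is finite; a subgroup of $\Aut(T_1)$ with a finite vertex stabilizer is discrete, yielding~(i).

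The main obstacle is therefore the geometric claim that the convex hull of $\mathrm{pr}_1(\Gamma)\cdot v$ is all of $T_1$. Cocompactness of $\Gamma \acts T_1 \times T_2$ forces $\mathrm{pr}_1(\Gamma)\cdot v$ to be $R$-dense in $T_1$ for some $R > 0$. Given $u \in VT_1$, leaflessness of $T_1$ provides at least two infinite rays from $u$ starting with distinct edges; for each such ray I would pick a vertex $u'$ at distance greater than $R$ from $u$ and locate an orbit point within distance $R$ of $u'$, which then sits in the component of $T_1 \setminus \{u\}$ determined by the initial edge of the ray (since the $R$-ball around $u'$ misses $u$). The geodesic between two such orbit points, chosen in different components of $T_1 \setminus \{u\}$, passes through $u$, proving $u \in F$ and hence $F = T_1$.
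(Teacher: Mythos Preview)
Your proposal is correct and follows essentially the same route as the paper. The chain of implications and the cited black box for $(\mathrm{i})\Leftrightarrow(\mathrm{iv})$ match, and your argument for $(\mathrm{iv})\Rightarrow(\mathrm{iii})$ is the same as the paper's (using that $\Gamma/\Gamma_{3-i}$ is virtually free, hence residually finite, and that $\Gamma_v/\Gamma_{3-i}$ is finite). For $(\mathrm{ii})\Rightarrow(\mathrm{i})$ the paper simply invokes Corollary~\ref{cor:Trees} together with the remark that a cocompact action on a leafless tree preserves no proper subtree; your argument is an inline unpacking of that corollary, applying Corollary~\ref{cor:Commens=>Insep} to $\Gamma$ directly and carrying out the convex-hull step by hand. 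One small advantage of your version is that it sidesteps the need to check that weak separability of $\Gamma_v$ in $\Gamma$ descends to weak separability of $\mathrm{pr}_1(\Gamma)_v$ in $\mathrm{pr}_1(\Gamma)$, which is what a literal application of Corollary~\ref{cor:Trees} to $\mathrm{pr}_1(\Gamma)$ would require.
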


\begin{proof}
For the equivalence of (i) and (iv), see \cite[Proposition~1.2]{BuMo2}.  If (iv) holds, then $\Gamma$ is virtually the product of two groups $\Gamma_1$ and $\Gamma_2$ acting properly and cocompactly on $T_1$ and $T_2$ respectively. Such groups are virtually free, hence residually finite, so $\Gamma$, $\Gamma/\Gamma_1$ and $\Gamma/\Gamma_2$ are all residually finite. In particular, $\Gamma_1$ and $\Gamma_2$ are separable in $\Gamma$. For every $v \in VT_i$, the group $\Gamma_v$ is separable, since $\Gamma_v/\Gamma_{3-i}$ is finite. Assertion (iv) thus implies (iii).

That (iii) implies (ii) is clear. That  (ii) implies (i) follows from Corollary~\ref{cor:Trees}, using the fact that a cocompact action on a leafless tree does not preserve any proper subtree.
\end{proof}

We obtain the following abstract generalization of a statement originally proved by Burger and Mozes for lattices in products of trees with locally quasi-primitive actions (see  \cite[Proposition~2.1]{BuMo2}), and later extended to lattices in products of CAT($0$) spaces (see \cite[Proposition~2.4]{CaMoKM}).

\begin{cor}\label{cor:FaithfulProj}
Let $\Gamma$ be a lattice in the product $G_1 \times G_2$ of two locally compact groups. Assume that $G_1$ is totally disconnected and non-discrete and that every infinite closed normal subgroup of $G_1$ has trivial centralizer in $G_1$.  Suppose further that $\Gamma$ is finitely generated and that the canonical projection $\Gamma \to G_1$ has a dense image. If $\Gamma$ is residually finite, then the projection $\Gamma \to G_2$ is injective. 
\end{cor}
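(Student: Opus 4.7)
The plan is to prove the contrapositive: setting $K_i := \ker(\mathrm{pr}_i|_\Gamma)$ for $i = 1,2$, assume $K_2 \ne \{1\}$ and derive a contradiction. Since $K_1 \cap K_2 = \{1\}$, the first projection embeds $K_2$ as a discrete subgroup of $G_1$, and density of $\mathrm{pr}_1(\Gamma)$ together with $K_2 \trianglelefteq \Gamma$ makes $K_2$ a closed normal subgroup of $G_1$. A preliminary observation rules out nontrivial finite normal subgroups of $G_1$: for any such $F$, the centralizer $C_{G_1}(F)$ is the kernel of $G_1 \to \mathrm{Aut}(F)$, hence closed of finite index, hence infinite; the hypothesis applied to it yields $C_{G_1}(C_{G_1}(F)) = \{1\}$, forcing $F = \{1\}$. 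Hence under $K_2 \ne \{1\}$, the subgroup $K_2$ is infinite and $C_{G_1}(K_2) = \{1\}$.

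Fix a compact open subgroup $U \le G_1$ and set $H := \Gamma \cap (U \times G_2)$. Then $H$ is commensurated in $\Gamma$ (because $U \times G_2$ is commensurated in $G_1 \times G_2$), infinite (because $\mathrm{pr}_1(\Gamma) \cap U$ is dense in $U$), and relatively residually finite in $\Gamma$ (because $\Gamma$ is residually finite). Crucially $K_1 \le H$, while $H/K_1 \cong \mathrm{pr}_1(\Gamma) \cap U$ is infinite, so $K_1$ has infinite index in $H$; equivalently every $M \in \mc{N}(\Gamma, H)$ satisfies $M \not\subseteq K_1$. For each such $M$, the closure $\overline{\mathrm{pr}_1(M)}$ is a nontrivial closed normal subgroup of $G_1$ (normal because $\mathrm{pr}_1(\Gamma)$ is dense and normalizes $\mathrm{pr}_1(M)$), hence infinite by the preliminary step, hence has trivial centralizer in $G_1$ by hypothesis. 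Since centralizers agree with centralizers of closures in Hausdorff groups, this yields the key weakened conclusion $C_\Gamma(M) \subseteq K_1$ for every $M \in \mc{N}(\Gamma, H)$.

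I would then mimic the proofs of Corollaries~\ref{cor:RF:relative} and~\ref{cor:Intersections:relative} using this weakened conclusion. For $x \in \FC_\Gamma(H)$, choose finite index $H_0 \le H$ centralized by $x$ and apply the Main Theorem to $H_0$, obtaining $N \in \mc{N}(\Gamma, H)$ with $N = \wt{N \cap H_0}$; by Lemma~\ref{lem:closure_normal}, $N$ is relatively residually finite, so Lemma~\ref{lem:FiniteResidual:relative} gives $x \in C_\Gamma(H_0) \le C_\Gamma(N \cap H_0) = C_\Gamma(N) \subseteq K_1$, proving $\FC_\Gamma(H) \subseteq K_1$. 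Then, since $K_2 \cap H = \Gamma \cap (U \times \{1\})$ is finite (discrete meets compact) and $H$ is relatively residually finite, I would choose $M_1 \in \mc{N}(\Gamma, H)$ with $K_2 \cap H \cap M_1 = \{1\}$. For $x \in K_2 \cap M_1$, commensurated-ness of $H$ yields finite index $H_1 \le H$ with $xH_1x^{-1} \le H$, so $[x, H_1] \subseteq H \cap K_2 \cap M_1 = \{1\}$; hence $x \in \FC_\Gamma(H) \subseteq K_1$, giving $K_2 \cap M_1 \subseteq K_1 \cap K_2 = \{1\}$. Since $K_2, M_1 \trianglelefteq \Gamma$ have trivial intersection, they commute, so $K_2 \le C_\Gamma(M_1) \subseteq K_1 \cap K_2 = \{1\}$, the required contradiction.

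The main obstacle is that the hypothesis of Corollary~\ref{cor:Intersections:relative} (every infinite normal subgroup of $\Gamma$ has trivial centralizer) fails for $\Gamma$ whenever $K_1 \ne \{1\}$, since $K_1$ and $K_2$ then centralize each other. The resolution is the observation that $K_1 \le H$ has infinite index in $H$: this forces every member of $\mc{N}(\Gamma, H)$ to have nontrivial first projection, converting the centralizer hypothesis on $G_1$ into the weaker but still sufficient inclusion $C_\Gamma(M) \subseteq K_1$, which combined with $K_1 \cap K_2 = \{1\}$ delivers the contradiction.
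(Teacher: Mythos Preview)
Your argument is correct, but the paper takes a shorter and more modular route. Instead of re-running the arguments of Corollaries~\ref{cor:RF:relative} and~\ref{cor:Intersections:relative} in a form relative to $K_1$, the paper observes that since $K_1$ and $K_2$ commute and $\Gamma$ is residually finite, Lemma~\ref{lem:FiniteResidual:relative} implies that the profinite closure $\overline{K_1}$ still commutes with $K_2$; hence $\proj_1(\overline{K_1}) \le C_{G_1}(K_2) = \{1\}$, so $\overline{K_1} = K_1$. This makes $\Gamma_1 := \proj_1(\Gamma) \cong \Gamma/K_1$ residually finite, and one can then invoke Corollary~\ref{cor:no_discrete_normal} directly (with $K_2$ as the offending discrete normal subgroup of $G_1$ contained in $\Gamma_1$). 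Your approach avoids passing to the quotient by carrying the inclusion $C_\Gamma(M) \subseteq K_1$ through the argument in place of the equality $C_{\Gamma_1}(M) = \{1\}$ used inside Corollary~\ref{cor:no_discrete_normal}; this is essentially an unrolled version of the same proof, done upstairs in $\Gamma$ rather than downstairs in $\Gamma_1$. The paper's factorization has the advantage of isolating the single new ingredient (profinite closedness of $K_1$) and then quoting an already-established result, whereas your version is self-contained and makes the dependence on the compact open subgroup $U$ explicit from the start.
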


\begin{proof}
Let $\proj_1\colon \Gamma \to G_1$ and $\proj_2\colon\Gamma \to G_2$ be the projection maps, let $N_1 := \ker\proj_2 \le G_1$ and let $N_2 := \ker\proj_1 \le G_2$.  We must show that $N_1$ is trivial.  We shall proceed by contradiction and assume that $N_1$ is non-trivial.  The group $N_1$ is a non-trivial discrete subgroup of $G_1$ and is normalized by the dense subgroup $\Gamma_1:= \proj_1(\Gamma)$ of $G_1$.  As $N_1$ is also closed, it is normal in $G_1$, and hence $N_1$ is infinite by Lemma~\ref{lem:finite_normal}.

The groups $N_1$ and $N_2$ are two normal subgroups of $\Gamma$ with trivial intersection, hence they commute. Since $\Gamma$ is residually finite, the profinite closures $\overline{(N_1)}_{\Gamma}$ and $\overline{(N_2)}_{\Gamma}$ also commute, in light of Corollary~\ref{lem:FiniteResidual:relative}. We have in particular that 
$$\proj_1(\overline{(N_2)}_{\Gamma}) \leq C_{G_1}(\proj_1(N_1))=\{1\},$$ 
so $\overline{(N_2)}_{\Gamma} \le \ker\proj_1 = N_2$.  Thus, $N_2$ is profinitely closed in $\Gamma$. Hence, $\Gamma_1 \cong \Gamma/N_2$ is residually finite. We thus contradict Corollary~\ref{cor:no_discrete_normal}, since $N_1$ is an infinite discrete normal subgroup of $\Gamma_1$.
\end{proof}

\section{Applications to just-infinite groups}
We finally consider just-infinite groups. An infinite group is \textbf{just-infinite} if every proper quotient is finite. These groups have restricted normal subgroups. We here explore restrictions on their commensurated subgroups.

\begin{cor}\label{cor:ProfiniteDensity:JustInfinite}
	Let $G$ be a finitely generated just-infinite group and $H \leq G$ be an infinite commensurated subgroup. Then the residual closure of  $H$ in $G$ is of finite index.
\end{cor}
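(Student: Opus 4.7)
The plan is to invoke the Main Theorem applied to $H$ itself and then use the just-infiniteness to force finite index. Specifically, I would first note that since $G$ is finitely generated, say by elements $g_1,\dots,g_n$, the cosets $g_1H,\dots,g_nH,H$ generate $G$, so the hypothesis of the Main Theorem that $G$ be generated by finitely many cosets of $H$ is automatically met. Combined with the assumption that $H$ is commensurated, this lets me apply the Main Theorem to conclude that the normal subgroup
\[
N := \bigcap_{g\in G} g\widetilde{H}g^{-1}
\]
has finite index in $\widetilde{H}$.

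Next I would establish that $N$ is non-trivial. This is immediate: $H \leq \widetilde{H}$, and $H$ is infinite by hypothesis, so $\widetilde{H}$ is infinite; since $N$ has finite index in $\widetilde{H}$, it follows that $N$ itself is infinite and in particular non-trivial.

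Finally, I would apply the just-infinite hypothesis. Since $G$ is just-infinite and $N \trianglelefteq G$ is non-trivial, the quotient $G/N$ must be finite, i.e., $[G:N] < \infty$. Because $N \leq \widetilde{H} \leq G$, we conclude
\[
[G:\widetilde{H}] \leq [G:N] < \infty,
\]
which is exactly the statement that $\widetilde{H}$ has finite index in $G$.

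There is no real obstacle here; the whole proof is just a two-line assembly of the Main Theorem (to produce a non-trivial normal subgroup inside $\widetilde{H}$) with the defining property of just-infinite groups (forcing any non-trivial normal subgroup to have finite index). The only small point worth emphasizing in the write-up is that the coset-generation hypothesis of the Main Theorem comes for free from finite generation of $G$.
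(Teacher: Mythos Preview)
Your proof is correct and follows essentially the same approach as the paper: apply the Main Theorem to produce the normal subgroup $N=\bigcap_{g\in G} g\widetilde{H}g^{-1}$, observe that $N$ is infinite (the paper phrases this via $[H:N\cap H]<\infty$ rather than $[\widetilde{H}:N]<\infty$, but the effect is the same), and then invoke just-infiniteness to force $[G:N]<\infty$ and hence $[G:\widetilde{H}]<\infty$.
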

\begin{proof}
Let $N := \bigcap_{g \in G} g \widetilde H g^{-1} \normal G$ be the normal subgroup afforded by the Main Theorem. 
Since $H$ is infinite and $N$ contains a finite index subgroup of $H$, we see that $N$ is infinite. The group $N$ is thus of finite index in the just-infinite group $G$. Hence, $\widetilde H$ is also of finite index.
\end{proof}

In \cite{Wesolek_Branch}, P. Wesolek shows that every commensurated subgroup of a finitely generated just-infinite branch group is either finite or of finite index. 
As an application of the Main  Theorem,  
we obtain the following related result. A \textbf{maximal subgroup} of a group is a subgroup that is maximal among all proper subgroups. 

\begin{cor}\label{cor:NoCommensurated}
Let $G$ be a finitely generated just-infinite group. Assume that whenever $M<H\le G$ are subgroups of $G$ such that $M$ is maximal in $H$ and $H$ has finite index in $G$, then $M$ has finite index in $G$. Then every commensurated subgroup of $G$ is either finite or has finite index in $G$.	  
\end{cor}

\begin{proof}
	Let $H \leq G$ be an infinite commensurated subgroup.  By Corollary~\ref{cor:ProfiniteDensity:JustInfinite}, the residual closure $\widetilde H$ is of finite index in $G$. Suppose toward a contradiction that $H<\widetilde{H}$. Let $M < \widetilde H $ be a maximal subgroup containing $H$, which exists since $\widetilde H$ is finitely generated. By hypothesis, the group $M$ is  of finite index in $\widetilde H$. In particular, $M$ is of finite index in $G$ and thus weakly separable. On the other hand, $\widetilde{H}$ is the smallest weakly separable subgroup of $G$ containing $H$. We infer that $M = \widetilde H$, which is absurd. Hence, $H=\widetilde{H}$ and so has finite index in $G$.
\end{proof}

Examples of groups satisfying the hypotheses of Corollary~\ref{cor:NoCommensurated} include the Grigorchuk group (see   \cite[Lemma~4]{GriWil}) and many related finitely generated torsion branch groups (see \cite{AlKlTh} and  \cite{Per2}), so we recover the corresponding special cases of Wesolek's result \cite{Wesolek_Branch}. 

Another striking family of just-finite groups is that consisting of the `residually finite Tarski monsters' constructed by M.~Ershov and A.~Jaikin in \cite{ErshovJaikin}. In addition to their residual finiteness, these groups enjoy the property that each of their finitely generated subgroups is finite or of finite index. In particular, these groups are \textbf{LERF}:  every finitely generated subgroup is separable.

The monster groups from \cite{ErshovJaikin} indeed enjoy a stronger property. For $p$ a prime, we say that a subgroup is \textbf{$p$-separable} if it is the intersection of subgroups of $p$ power index. We say that a group is \textbf{$p$-LERF} if every finitely generated subgroup is $p$-separable. The examples from \cite{ErshovJaikin} are virtually \textbf{$p$-LERF}.

 The following application, which applies to the just-infinite groups constructed in  \cite{ErshovJaikin}, was pointed out to us by A.~Jaikin. 

\begin{cor}\label{cor:p-LERF}
 Let $G$ be a finitely generated just-infinite group which is virtually $p$-LERF for some prime $p$. Then every commensurated subgroup of $G$ is either finite or of finite index.	  
\end{cor}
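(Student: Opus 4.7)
The plan is to combine Corollaries~\ref{cor:ProfiniteDensity:JustInfinite} and~\ref{cor:maintheorem_sep} with a density argument in a pro-$p$ completion, using the Burnside basis theorem to exploit the $p$-LERF hypothesis.

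Let $H \le G$ be an infinite commensurated subgroup; I aim to show $[G:H]<\infty$. By Corollary~\ref{cor:ProfiniteDensity:JustInfinite} the residual closure $\widetilde H$ has finite index in $G$, hence so does $\overline H \supseteq \widetilde H$. Corollary~\ref{cor:maintheorem_sep} then produces a finite-index normal subgroup $N_1 := \bigcap_{g \in G} g\overline H g^{-1}$ of $G$ satisfying $\overline H = N_1 H$, $[H:N_1\cap H]<\infty$, and $N_1 = \overline{N_1\cap H}$. Since $G$ is virtually $p$-LERF and $p$-LERF passes to finite-index subgroups, the $G$-normal core of any $p$-LERF finite-index subgroup is itself $p$-LERF; intersecting with $N_1$, I obtain a $G$-normal finite-index subgroup $N \le N_1$ that is $p$-LERF. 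Set $H_0 := H \cap N$; this is a finite-index, hence infinite, subgroup of $H$, commensurated in $N$. Lemma~\ref{lem:closure_normal}(ii), applied with $N$ in the role of its ``$N$'' and $N_1\cap H$ in the role of its ``$H$'', yields $N \cap \overline{N_1\cap H}^G = \overline{N\cap N_1\cap H}^G$; using $N_1 = \overline{N_1\cap H}^G$ and $N\le N_1$ this simplifies to $\overline{H_0}^G = N$. A routine comparison of finite-index subgroups of $N$ with those of $G$ then gives $\overline{H_0}^N = N$ as well, so $H_0$ is profinitely dense in the finitely generated $p$-LERF group $N$.

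To upgrade this density to equality, I would pass to the pro-$p$ completion $P$ of $N$, a topologically finitely generated pro-$p$ group in which $N$ embeds densely (as $p$-LERF applied to the trivial subgroup implies residual finiteness by $p$-groups). Since $H_0$ is profinitely dense in $N$, its image is dense in $P$. Let $d := d(P)$ denote the topological rank of $P$, i.e., the $\mathbf F_p$-dimension of the finite elementary abelian Frattini quotient $P/\Phi(P)$. Density forces the image of $H_0$ in $P/\Phi(P)$ to be surjective, so I may choose $h_1,\dots,h_d \in H_0$ whose images form an $\mathbf F_p$-basis of $P/\Phi(P)$. The Burnside basis theorem then ensures that $F := \langle h_1,\dots,h_d\rangle$ has dense image in $P$; concretely, $FL = N$ for every normal subgroup $L \trianglelefteq N$ of $p$-power index.

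The endgame uses $p$-LERF: the finitely generated subgroup $F$ is $p$-separable in $N$, so $F = \bigcap_j K_j$ with each $K_j \le N$ of finite $p$-power index. For each $K_j$, let $L_j$ be its normal core in $N$; as a finite intersection of the $N$-conjugates of $K_j$, each of $p$-power index, $L_j$ is normal in $N$ and of $p$-power index. The density statement yields $FL_j = N$, while $FL_j \subseteq K_j$ since both $F$ and $L_j$ lie in $K_j$. Hence $K_j = N$ for every $j$, so $F = N$. Since $F \le H_0 \le N$, this forces $H_0 = N$, whence $H \supseteq N$ has finite index in $G$, as required. The main obstacle is the passage from profinite density of $H_0$ in $N$ to the equality $H_0 = N$: the Burnside basis theorem is the essential tool here, converting a finite amount of data (a basis of the Frattini quotient) into topological generation of the entire pro-$p$ completion and, in combination with $p$-LERF, into a genuine equality of abstract subgroups.
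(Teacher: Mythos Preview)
Your proof is correct and follows essentially the same approach as the paper: the endgame---finding a finitely generated subgroup dense in the pro-$p$ completion via the Burnside basis theorem and then using $p$-separability to force it to be everything---is precisely the content of the paper's Lemma~\ref{lem:p-LERF}. Your reduction to a normal $p$-LERF subgroup $N$ in which $H_0$ is fully profinitely dense is somewhat more elaborate than the paper's (which simply intersects $H$ with a $p$-LERF finite-index subgroup $G_0$ and observes that the profinite closure of $H\cap G_0$ in $G_0$ still has finite index), but the substance is identical.
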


Since the Grigorchuk group (which is a $2$-group) is LERF by Theorem 2 of \cite{GriWil} and the Gupta--Sidki $3$-group is LERF by Theorem 2 of \cite{Garrido}, and upon noting that a $p$-group that is LERF is automatically $p$-LERF, we thus also recover two more special cases of Wesolek's result in \cite{Wesolek_Branch}. 

To prove Corollary \ref{cor:p-LERF} we use the following  subsidiary fact. 

\begin{lem}\label{lem:p-LERF}
	 Let $G$ be a finitely generated   group which is $p$-LERF for some prime $p$. If a  subgroup $H \leq G$ is of infinite index in $G$, then its  profinite  closure $\overline{H}$ is also of infinite index. 
\end{lem}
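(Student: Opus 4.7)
The plan is to prove the contrapositive: if $[G:\overline{H}] < \infty$ then $[G:H] < \infty$. I first establish two preliminary facts about any finitely generated $p$-LERF group $G$. First, $p$-LERF is inherited by every subgroup: for $K \leq L \leq G$ with $K$ finitely generated, $K$ is an intersection of $p$-power index subgroups of $G$, and intersecting each of these with $L$ expresses $K$ as an intersection of $p$-power index subgroups of $L$. Secondly, every finite index subgroup $L \leq G$ has $p$-power index: $L$ is finitely generated (since $G$ is) and hence $p$-separable, and only finitely many subgroups of $G$ contain $L$, so $L$ is a \emph{finite} intersection of $p$-power index subgroups, which forces $[G:L]$ itself to be a $p$-power. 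A consequence is that the profinite completion $\hat{G}$ of $G$ coincides with its pro-$p$ completion and is topologically finitely generated.

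Assuming $[G:\overline{H}] < \infty$, the next step is to reduce to the base case $\overline{H} = G$ by replacing the ambient group with $\overline{H}$. The replacement is legitimate: $\overline{H}$ is finitely generated and $p$-LERF by the preceding observations, and any finite index subgroup $N$ of $\overline{H}$ containing $H$ also has finite index in $G$, so $N \supseteq \overline{H}^G = \overline{H}$; thus the profinite closure of $H$ computed inside $\overline{H}$ is still $\overline{H}$.

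In the base case $\overline{H} = G$, the image of $H$ in the topologically finitely generated pro-$p$ group $\hat{G}$ is dense. The Frattini quotient $\hat{G}/\Phi(\hat{G})$ is a finite $\mathbf{F}_p$-vector space, and by the Frattini argument for pro-$p$ groups a subset of $\hat{G}$ topologically generates $\hat{G}$ if and only if its image spans this quotient. I therefore pick finitely many elements $h_1, \ldots, h_d \in H$ whose images span $\hat{G}/\Phi(\hat{G})$; their abstract subgroup $K := \langle h_1, \ldots, h_d \rangle$ is then finitely generated and has dense image in $\hat{G}$, which translates to $\overline{K} = G$. By $p$-LERF, $K$ is $p$-separable, and by the second preliminary observation $p$-separable coincides with separable in $G$, so $K = \overline{K} = G$. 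Since $K \leq H$, this forces $H = G$, completing the proof.

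The main obstacle is this base case. The crucial point is that $p$-LERF forces the profinite topology on $G$ to coincide with the pro-$p$ topology, which is what guarantees that the Frattini quotient of $\hat{G}$ is finite-dimensional over $\mathbf{F}_p$. This finite-dimensionality is precisely what allows the Frattini argument to extract, from the (potentially infinitely generated) dense subgroup $H$, a finitely generated subgroup with the same full profinite closure; separability of finitely generated subgroups then collapses this subgroup, and hence $H$, to $G$.
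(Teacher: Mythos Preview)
Your proof is correct and follows essentially the same strategy as the paper's: pass to the pro-$p$ completion, use that the relevant closure is a topologically finitely generated pro-$p$ group, extract via the Frattini argument a finitely generated subgroup $K\le H$ with the same closure, and then apply $p$-separability to $K$. The paper argues slightly more directly in $G_{\widehat p}$ without your preliminary reduction to $\overline{H}=G$; your additional observation that in a finitely generated $p$-LERF group every finite index subgroup has $p$-power index (so the profinite and pro-$p$ topologies coincide) is a pleasant streamlining but not a genuinely different route.
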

\begin{proof}
If $\overline{H}$ is of finite index in $G$, then the closure of the image of $H$ in the pro-$p$ completion  $G_{\widehat p}$ is of finite index. The closure in $G_{\widehat p}$ is then topologically finitely generated, hence  $H$ contains a finitely generated subgroup $H_0$  whose image in $G_{\widehat p}$ has the same closure as the image of  $H$. Since $H$ is of infinite index, so is  $H_0$, but the closure of the image of $H_0$   in $G_{\widehat p}$ is of finite index. This contradicts that  $G$ is $p$-LERF.
\end{proof}

\begin{proof}[Proof of Corollary~\ref{cor:p-LERF}]
Let $H \leq G$ be an infinite commensurated subgroup and let $G_0$ be a subgroup of finite index in $G$ which is $p$-LERF. By Corollary~\ref{cor:ProfiniteDensity:JustInfinite}, the residual closure $\widetilde H$ is of finite index, so the profinite closure is also of finite index. The profinite  closure of   $H_0 := H \cap G_0$ in  ${G_0}$ is thus of finite index in $G_0$.  By Lemma~\ref{lem:p-LERF}, this implies that the index of  $H_0$ in $G_0$ is finite. The index of  $H$ in $G$ is thus finite. 
\end{proof}

\begin{acknowledgement}
We would like to thank the Isaac Newton Institute for Mathematical Sciences, Cambridge for support and hospitality during the programme \textit{Non-positive curvature group actions and cohomology} where part of the work on this paper was accomplished. 
\end{acknowledgement}


\def\cprime{$'$}

\end{document}